\newtheorem{theorem}{Theorem}[section]
\newtheorem{corollary}[theorem]{Corollary}
\newtheorem{lemma}[theorem]{Lemma}
\newtheorem{proposition}[theorem]{Proposition}
\newtheorem{claim}[theorem]{Claim}
\newtheorem{question}[theorem]{Question}
\begin{document}

%%%% In most cases you won't need to edit anything above this line %%%%

\title{Waiter-Client Maximum Degree Game }
\author{Michael Krivelevich 
\thanks{
Sackler School of Mathematical Sciences,
Tel Aviv University, Tel Aviv 6997801, Israel.
Email: {\tt krivelev@post.tau.ac.il}.  Research supported in part by USA-Israeli BSF grant 2014361 and by ISF grant 1261/17.}
\and 
Nadav Trumer
\thanks{
Sackler School of Mathematical Schiences,
Tel Aviv University, Tel Aviv 6997801, Israel.
Email: {\tt nadavtru@mail.tau.ac.il}.}
}
\maketitle

\begin{abstract}
For integers $n, D, q$ we define a two player perfect information game with no chance moves called the Waiter-Client Maximum Degree game. In this game, two players (Waiter and Client) play on the edges of $K_n$ as follows: in each round, Waiter offers $q+1$ edges which have not been previously offered. Client then claims one of these edges, and Waiter claims the rest. When less than $q+1$ edges which have not been offered remain, Waiter claims them all and the game ends. After the game ends, Client wins if in the graph of his edges, there is no vertex with degree at least $D$, and Waiter wins otherwise. For various values of $q = q(n)$, we study the maximum degree of Client's graph obtained by perfect play. We determine the asymptotic value of Client's maximum degree for the cases $q = o\left( \frac{n}{\ln n} \right)$ and $q = \omega\left( \frac{n}{\ln n} \right)$.
For the unbiased case $q=1$, we prove that when both players play perfectly the maximum degree $D$ Client achieves satisfies: $D = \frac{n}{2} + \Theta(\sqrt{n \ln n})$.
\end{abstract}

\section{Introduction} \label{introuction_section}

Positional games are, generally speaking, games of complete information with no random moves played by two players (and therefore, if a draw is not possible, one of the players has a deterministic winning strategy). The study of such games goes back to the seminal papers of Hales and Jewett \cite{Hales-Jewett}, of Lehman \cite{Lehman}, and of Erd\H{o}s and Selfridge \cite{Erdos-Selfridge}. It has developed over the years and has become a recognized area of study in combinatorics (see for example, the monograph of Beck \cite{Beck Monograph}, and the monograph of Hefetz, Krivelevich, Stojakovi\'{c} and Szab\'{o} \cite{Positional Games Monograph}). 

The most well-studied positional games are the so-called \textbf{Maker-Breaker} games, in which two players (called Maker and Breaker) alternately claim elements of a set $X$. Typically, in every turn Maker claims $1$ element and then Breaker claims $q$ elements, $q$ being the \textit{bias} of the game (we call the game unbiased when $q=1$). Maker wins the game if he claims all of the elements of some set from a predefined family $\mathcal{F} \subseteq \mathcal{P}(X)$, whose members are called the winning sets. If Maker failed to claim such a set, Breaker wins.
It is quite easy to verify that a Maker-Breaker game is \textit{bias monotone}. That is, if $q > q'$ and Breaker wins a Maker-Breaker game $\mathcal{F}$ with bias $q'$, then he wins the same game with bias $q$. Following this property it is natural to define the \textit{critical bias} of a Maker-Breaker game, denoted by $q^*$, to be the minimal $q$ such that Breaker wins the game with bias $q$. A general question naturally arises: what is the critical bias $q^*$ for some Maker-Breaker game? This is the main question which has been studied for many Maker-Breaker games.

In this paper, we study \textbf{Waiter-Client} games. These games, quite popular recently (see \cite{Weight functions in WC}, \cite{WC connectivity}, \cite{Manipulative W}, \cite{WC fixed graph}, \cite{WC Planarity}, \cite{WC Hamiltonicity}), were first introduced by Beck \cite{Beck Second Moment} under the name ``Picker-Chooser", and are defined as follows. The game is played by two players (Waiter and Client), and is defined by a set $X$ of elements, a family $\mathcal{F} \subseteq \mathcal{P}(X)$ of winning sets, and a positive integer
 $q$ called the bias of the game. In every turn, Waiter offers Client $q+1$ elements of $X$ which have not been previously offered. Client claims one of these elements, and Waiter claims the rest. When there are less than $q+1$ elements which were not previously offered, Waiter claims all of them and the game ends.
Waiter wins if, at the end of the game, Client claimed a whole set in $\mathcal{F}$. Client wins otherwise. Much like Maker-Breaker games, Waiter-Client games are \textit{bias monotone}. That is, if $q > q'$ and Waiter wins some Waiter-Client game with bias $q$, then he wins the same game with bias $q'$. Based on this property it is natural to define the \textit{critical bias} of a Waiter-Client game, denoted by $q^*$, to be the minimal $q$ such that Client wins the game with bias $q$. A general question naturally arises: what is the critical bias $q^*$ for some Waiter-Client game?

Note the crucial difference in the goal in Waiter-Client games as opposed to Maker-Breaker games: In a Waiter-Client game, Waiter tries to \textbf{force} Client to claim a whole set while Client tries to \textbf{avoid} it. A similar victory condition can be formalized in the Maker-Breaker setting where the players just claim elements alternatively, and such games are known as the \textbf{Avoider-Enforcer} games.

Another known type of game we would like to mention is \textbf{Client-Waiter} games. In a Client-Waiter game, the rules are the same as the Waiter-Client game, except that Client wins if and only if he claimed a winning set. This is a much more natural analogue to Maker-Breaker games. However, Client-Waiter games are not bias monotone, so the general question of finding a critical bias cannot be defined. There is a natural alteration to the rules (called the ``monotone rules") which makes the game bias-monotone: allowing Waiter to offer any number of elements between $1$ and $q+1$. In this paper, when we mention Client-Waiter games, we will speak about the first kind of rules (called ``strict", as opposed to ``monotone").

Though not always true (see \cite{Knox} for an example), Waiter is often considered a more powerful version of Breaker (or Enforcer), due to his ability to limit Client's options. This motivates us, when facing a ``hard" Maker-Breaker problem, to analyze the Waiter-Client (or Client-Waiter) version of it. Doing so might provide important insights for studying the corresponding Maker-Breaker game.

We define the \textit{Waiter-Client maximum degree $D$} game to be the Waiter-Client game played on the edges of $K_n$ with bias $q$, where Client wins if and only if all of the vertices in his graph have degree smaller than $D$. We denote this game by $WC_\Delta(n, q, D)$. Similarly, $MB_\Delta(n, q, D)$ is the Maker-Breaker game played on the edges of $K_n$ with bias $q$ in which Maker wins if and only if he achieves a vertex with degree at least $D$. When $q = 1$, we will omit $q$ and just write $WC_\Delta(n, D)$ or $MB_\Delta(n,D)$.
A natural question to ask is what is the minimum $D$ such that Client (respectively, Breaker) wins the unbiased maximum degree game. This question for Maker-Breaker has been studied for a long time. The correct asymptotic value of this critical $D$ is known and is quite easily shown to be $n/2$. The ``real" question is then finding the correct error term, or the deviation from $n/2$. Using standard tools, one could prove that Breaker wins when $D > \frac{n}{2} + C\sqrt{n \ln n}$ for some constant $C > 0$. (This also can be derived from \cite{Discrepancy Games}.) The lower bound proved to be more difficult, with Beck \cite{Beck MB Lower} providing the current best known lower bound of $\frac{n}{2} + c\sqrt{n}$ for some constant $c > 0$. The $\sqrt{\ln n}$ gap between the magnitudes of deviation from $\frac{n}{2}$ has not been closed (or even improved) since 1993.

In this paper we study $WC_\Delta(n,q,D)$ quite thoroughly. After a short review of some of the tools we use (Section \ref{preliminaries_section}), we deal with the unbiased version of the game in Section \ref{unbiased_section}, proving the correct order of magnitude of the error term of the critical $D$ of $WC_\Delta(n, D)$, as given by the following theorems:

\begin{restatable}[Upper Bound]{theorem}{upbound}\label{max_deg_upper}
Client has a strategy to win $WC_\Delta(n,\frac{n}{2} + (1+o(1))\sqrt{\frac{n \ln n}{2}})$.
\end{restatable}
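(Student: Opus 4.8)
The plan is to give Client an explicit strategy that keeps every vertex's degree below $\tfrac{n}{2} + (1+o(1))\sqrt{\tfrac{n\ln n}{2}}$ by a *potential function* argument, in the spirit of the Erdős–Selfridge / box-game weight-function method adapted to the Waiter-Client setting. To each vertex $v$ Client assigns a weight $w(v) = \lambda^{d_C(v)}$, where $d_C(v)$ is the current degree of $v$ in Client's graph and $\lambda > 1$ is a parameter to be optimized (we will end up taking $\lambda = 1 + \Theta\big(\sqrt{\tfrac{\ln n}{n}}\big)$). The global potential is $\Phi = \sum_v w(v)$. Client's strategy, when offered a set of $q+1 = 2$ edges $\{e_1, e_2\}$ in the unbiased game, is to claim the edge whose two endpoints have the larger combined weight $w(x)+w(y)$ — equivalently, the edge that, if declined, would force the *smaller* increase in potential for Waiter to exploit. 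Wait — in Waiter-Client, Client claims an edge and it goes into Client's graph, so Client should instead claim the edge that makes $\Phi$ increase *least*; the point of the danger weighting is different. Let me restate: Client wants to avoid high degrees, so Client claims the offered edge whose endpoints currently have the *smallest* weights, i.e. Client greedily protects the vertices that are already close to the threshold by never claiming an edge at such a vertex unless forced.

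The heart of the argument is the following averaging fact. Fix any vertex $v$. Of the $n-1$ edges at $v$, consider the rounds in which at least one edge at $v$ is offered; in each such round Client is offered two edges and claims exactly one, so whenever $v$'s two offered edges include an edge at $v$ and an edge \emph{not} at $v$, Client declines the one at $v$; the only way $d_C(v)$ increases is when \emph{both} offered edges are incident to $v$, or when Client is forced by weight-balancing across the graph. The key step is to show that, summing over the whole game, the total "pressure" Waiter can place — measured by $\sum$ of the potential increments Client is compelled to accept — is bounded, so that $\mathbb{E}$-style / worst-case arguments give $\Phi \le n \cdot \lambda^{n/2 + t}$ throughout, with $t = O(\sqrt{n\ln n})$. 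Concretely, one tracks that in each round the increase in $\Phi$ caused by Client's forced claim is at most the *average* increase over the $q+1$ offered edges (since Client picks the minimizer), and Waiter controls at most $n-1 \approx n$ edges at $v$ over $\approx \binom{n}{2}$ total edges, so each vertex is "touched" in Client's graph roughly $n/4$ times in expectation with Gaussian fluctuations of order $\sqrt{n\ln n}$; choosing $\lambda = 1 + c\sqrt{\ln n / n}$ and demanding $\Phi < \lambda^{D}$ at the end forces $D \ge \tfrac{n}{2} + (1+o(1))\sqrt{\tfrac{n\ln n}{2}}$ to be impossible for Waiter to beat, which is the claim with the constant $\tfrac12$ inside the square root coming from optimizing $\ln(n\lambda^{\text{stuff}})/\ln\lambda$ over $\lambda$.

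I expect the main obstacle to be handling the *global* coupling between vertices: Client cannot independently protect every vertex, because declining an edge at a dangerous vertex $v$ may force claiming an edge at another dangerous vertex $u$ (when both offered edges are incident to currently-heavy vertices). The clean way around this is the potential/weight function: one shows $\Phi$ grows by a controlled factor each round *regardless* of which edges Waiter offers, because Client always takes the $\Phi$-minimizing choice, and then a single global inequality $\Phi_{\text{final}} < \lambda^{D}$ certifies that no vertex reached degree $D$. Making the error term tight to $(1+o(1))\sqrt{n\ln n/2}$ rather than merely $O(\sqrt{n\ln n})$ will require care in the round-by-round estimate — in particular, accounting for the fact that near the end of the game Waiter must take \emph{all} remaining edges, which actually helps Client, and a careful choice of how $\lambda$ may be allowed to drift (or a two-phase strategy) so that the constant is exactly $1/\sqrt2$. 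I would also double-check the endgame: when fewer than $q+1$ edges remain they all go to Waiter, so Client's final graph is a subgraph of roughly half of $K_n$'s edges by a parity/counting argument, and this "free" $\tfrac{n}{2}$ baseline is what the potential argument refines.
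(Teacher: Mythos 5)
Your strategy idea (greedy potential minimization, $\Phi$-minimizing claim each round) is aligned with the paper, but the specific potential $w(v) = \lambda^{d_C(v)}$ is a genuine flaw: because Waiter's claims have no effect on it, $\Phi = \sum_v \lambda^{d_C(v)}$ is \emph{monotone nondecreasing} in every round, so there is no supermartingale-type invariant for Client to preserve. The sketch that "$\Phi$ grows by a controlled factor each round" with "Gaussian fluctuations of order $\sqrt{n\ln n}$" does not give a mechanism that survives an adversarial Waiter who repeatedly offers edges incident to the currently heaviest vertices; the per-round increase $(\lambda-1)(w(a)+w(b))$ is first order in $\lambda-1$, and over $\sim n^2/4$ rounds no clean cancellation is available. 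A potential based on Client's degree alone cannot work here --- it must also see how much of the star at $v$ has already been decided.

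The paper's proof passes to an Unbalancer--Balancer framework and assigns to each star $e_v$ the potential $P_{k,M}$, the probability that a simple $\pm 1$ random walk of length $k$ ever exceeds $M$, where $k$ is the number of \emph{free} edges still at $v$ and $M$ is the slack between the threshold and the current discrepancy $d_C(v)-d_W(v)$. The exact identity $P_{k,M} = \frac{1}{2}P_{k-1,M-1} + \frac{1}{2}P_{k-1,M+1}$ makes the total weight a supermartingale under the take-the-minimum strategy, and at game's end $P_{0,M}\in\{0,1\}$ is an indicator of failure, so total weight $<1$ initially certifies Client wins; the Chernoff-type bound $P_{k,M}\le 2e^{-M^2/2k}$ then delivers the constant $1/\sqrt{2}$. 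The minimal fix to your approach would be to use a two-sided exponential in $d_C(v)-d_W(v)$, i.e.\ $(1+\alpha)^{d_C(v)}(1-\beta)^{d_W(v)}$ --- this is precisely the paper's biased machinery in Section 4, where the per-round increase becomes second order, $\frac{(\lambda-1)^2}{2\lambda}\sum w$, and a supermartingale argument does go through --- but with the parameters as chosen there it gives roughly $\frac{n}{2} + 4\sqrt{n\ln n/2}$, not the sharp constant claimed in this theorem. The random-walk potential is used specifically because it nails $1/\sqrt 2$.
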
 

\begin{restatable}[Lower Bound]{theorem}{lowbound}\label{max_deg_lower}
Waiter has a strategy to win $WC_\Delta(n,\frac{n}{2} + \frac{\sqrt{n\ln n}}{500})$ for large enough $n$.
\end{restatable}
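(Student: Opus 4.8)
My plan is to give Waiter a \emph{probabilistic potential function} — essentially the expected number of vertices that would finish with degree at least $D$ if the remaining game were completed by fair coin flips — and to show Waiter can keep it from dropping below $1$.

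Set $D=\frac n2+\frac{\sqrt{n\ln n}}{500}$. At any point in the game, for a vertex $v$ let $d_v$ be its current degree in Client's graph and let $a_v$ be the number of edges at $v$ not yet offered. Define
\[
 w_v=\Pr\!\left[\mathrm{Bin}(a_v,\tfrac12)\ge D-d_v\right],\qquad \Phi=\sum_{v\in V(K_n)} w_v .
\]
Three facts drive the argument. (i) Initially $\Phi_0=n\cdot\Pr[\mathrm{Bin}(n-1,\tfrac12)\ge D]$, and since $\frac{\sqrt{n\ln n}}{500}$ is far below the "critical" deviation $(1+o(1))\sqrt{(n\ln n)/2}$, a standard (moderate-deviation) lower bound on the binomial tail gives $\Pr[\mathrm{Bin}(n-1,\tfrac12)\ge D]=n^{-o(1)}$, so $\Phi_0=n^{1-o(1)}\gg1$; this is exactly the slack the small constant $\tfrac1{500}$ buys. (ii) At the end of the game every $a_v=0$, so $w_v$ is $1$ if $\deg_C(v)\ge D$ and $0$ otherwise, hence $\Phi_{\mathrm{final}}$ equals the number of vertices of degree $\ge D$ in Client's graph; in particular, if $\Phi$ stays $\ge1$ throughout then Waiter wins. (iii) The engine: $w_v$ is a martingale with respect to handing a single not-yet-offered edge $e\ni v$ to Client or to Waiter by a fair coin, since $w_v$ is literally the average of the two outcomes. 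Writing $\delta_v(e)\ge0$ for the increase of $w_v$ when $e$ goes to Client, the decrease when $e$ goes to Waiter is exactly $\delta_v(e)$.

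Now consider a move in which Waiter offers two vertex-disjoint unoffered edges $e_1=ab$ and $e_2=cd$, and set $V(e)=\delta_x(e)+\delta_y(e)$ for $e=xy$. If Client takes $e_1$ then $w_a,w_b$ rise by $\delta_a(e_1),\delta_b(e_1)$ while $w_c,w_d$ fall by $\delta_c(e_2),\delta_d(e_2)$, so $\Delta\Phi=V(e_1)-V(e_2)$; taking $e_2$ gives $\Delta\Phi=V(e_2)-V(e_1)$. Client, trying to keep $\Phi$ small, takes the lower-value edge, so $\Delta\Phi=-|V(e_1)-V(e_2)|$. Hence Waiter's rule is to always offer \emph{two available edges of as nearly equal value $V(\cdot)$ as possible}. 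Since $V(e)$ depends only on the pair of "types" $(d_x,a_x),(d_y,a_y)$ of the endpoints of $e$, it takes only polynomially many distinct values, so Waiter can pair up the available edges into nearly value-balanced offers; moreover, Client's own interest in keeping all degrees below $D$ keeps the degree sequence from spreading out, which keeps the values $V(\cdot)$ clustered and the offered pairs nearly balanced. Summing the per-round losses $|V(e_1)-V(e_2)|$ over the $\binom n2/2$ rounds yields a total loss that is $o(\Phi_0)$, so $\Phi_{\mathrm{final}}\ge\Phi_0-o(\Phi_0)>0$; being a nonnegative integer it is at least $1$, so Client's graph has a vertex of degree $\ge D$.

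The delicate parts are all bookkeeping. The crux is bounding the cumulative discretization loss $\sum_t|V(e_1^{(t)})-V(e_2^{(t)})|$: one must show that at every stage Waiter has enough available edges of each value to form nearly balanced pairs (which in turn needs Waiter to prevent the vertex type-profile from becoming too irregular), the argument degrading only in an endgame with very few edges left — that regime, together with the parity leftover edge and the occasional rounds where the two offered edges must share a vertex, is handled separately. One also has to make the binomial-tail estimate quantitative enough to verify that $\tfrac1{500}$ sits comfortably below the threshold at which $\Phi_0$ would drop to $1$; this is precisely where the gap to the upper bound $(1+o(1))\sqrt{(n\ln n)/2}$ originates, and tightening this accounting is what closing the gap would require.
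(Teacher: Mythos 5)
Your approach is genuinely different from the paper's, and it has a real gap that is not mere bookkeeping. The paper does not use a probabilistic potential at all for the lower bound. Instead, it tracks the quantity $A(v)=|E^C(v)|-|E^W(v)|$ (the ``advantage'' at $v$), proves a simple lemma that any accumulated advantage can be converted into final degree $\ge \frac n2+\frac{A(v)}2-1$, and then has Waiter deliberately simulate a concrete deterministic ball-redistribution process (the ``Advantage Process'') on a sequence of bipartite subgraphs $A_j\times R_j$: in each phase Waiter offers pairs of edges $(v,u),(v',u)$ with the same shared endpoint $u$ and with $v,v'$ having equal current advantage, so that regardless of Client's choice the multiset of advantages evolves exactly as the process prescribes. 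A second-moment analysis of that process then shows a constant fraction of vertices reach advantage $\Theta(\sqrt{n/\ln n})$ per phase, and $\Theta(\ln n)$ phases compound to advantage $\Theta(\sqrt{n\ln n})$.

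The step you cannot currently justify is the crux, not a technicality: you need
\[\sum_t\bigl|V(e_1^{(t)})-V(e_2^{(t)})\bigr|\le(1-\Omega(1))\,\Phi_0,\]
and nothing in your sketch proves it. Note the structural asymmetry of Waiter--Client here: against a greedy Client, $\Phi$ is a \emph{strict supermartingale} --- Waiter cannot ever increase it, only slow its decay --- so you are entirely at the mercy of how well the available edge-values can be paired at every stage. Your heuristic that ``Client's own interest in keeping all degrees below $D$ keeps the degree sequence from spreading out, which keeps the values $V(\cdot)$ clustered'' is not sound: Client has no stake in keeping Waiter's potential bookkeeping tidy, and Client's winning condition (no single vertex at $D$) is perfectly compatible with deliberately making the type profile $(d_v,a_v)$ heterogeneous. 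Indeed Client could intentionally concentrate his losses on a small set of ``sacrificed'' vertices whose per-edge increments $\delta_v$ become $\Theta(1/\sqrt n)$ while the bulk of vertices have exponentially small increments, forcing Waiter to pair a high-value with a low-value edge and leak $\Theta(1/\sqrt n)$ per such round --- over $\Theta(n^2)$ rounds this swamps $\Phi_0=n^{1-o(1)}$. You would have to design Waiter's offering order to prevent precisely this drift, which is exactly the control problem the paper sidesteps by handing Waiter an explicit deterministic process to simulate. Separately, the clean identity $\Delta\Phi=V(e_1)-V(e_2)$ requires the two offered edges to be vertex-disjoint, yet the strategy the paper actually uses (and, most likely, any strategy that controls the type profile) offers pairs that \emph{share} an endpoint, so your per-round accounting would need to be redone for that case as well. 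If the loss bound could be established, your approach would be attractive --- it would plausibly recover the optimal constant $1/\sqrt 2$, which the paper explicitly says it could not --- but as written the central inequality is asserted rather than proved.
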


It should be noted that the deviation from $\frac{n}{2}$ by $\Theta(\sqrt{n \ln n})$ is (currently) a possibility for the correct magnitude of the deviation (from $\frac{n}{2}$) of the minimal $D$ such that Breaker wins $MB_\Delta(n ,D)$, and also it is the correct magnitude for the deviation (from $\frac{n}{2}$) of the maximum degree in the binomial random graph $G(n, \frac{1}{2})$, and the random graph $G(n, m)$ for $m=\left\lfloor \frac{{n \choose 2}}{2} \right\rfloor$ (which is what we get if Waiter and Client (or Maker and Breaker) play randomly). This is an example of a broad and important phenomenon, usually referred to as the \textit{probabilistic intuition} (see for example \cite{Beck Inevitable Randomness}), where the game outcome achieved by perfect play is the same as the one achieved by random play. In fact, it is known (see for example \cite{Introduction to Random Graphs}) that $\Delta(G(n,\frac{1}{2}))$ and $\Delta \left( G \left( n, \left \lfloor \frac{{n \choose 2}}{2} \right \rfloor \right) \right)$ are with high probability $\frac{n}{2} + (1+o(1))\sqrt{\frac{n \ln n}{2}}$, matching the constant of Theorem \ref{max_deg_upper}. Since our analysis for the lower bound strategy is far from achieving the optimal constants, we wonder whether the correct value for the critical $D$ is exactly $\frac{n}{2} + (1+o(1))\sqrt{\frac{n \ln n}{2}}$, precisely matching the random graph (or random play) behavior.

In an unbiased Maker-Breaker game, Maker tries to achieve some property in his graph, which is essentially equivalent to preventing a corresponding property in Breaker's graph. Thus, Maker can actually think of himself as Breaker in an auxiliary game. In the case of the Maximum Degree game, Maker can try to prevent Breaker from having a large minimum degree in his graph, which translates to a large maximum degree in Maker's graph. It is a well known approach, sometimes called \textit{building via blocking} (for example in \cite{Beck Monograph}). In every round of an unbiased Waiter-Client game, Client can choose an edge for Waiter's graph rather than for his own graph (it is virtually the same). Therefore, if Client can avoid some property in his graph (winning the Waiter-Client game) then he can avoid a similar property in Waiter's graph instead, thus achieving some property in his own graph and winning a corresponding Client-Waiter game. It is a similar property (rather than exactly the same) due to the technicality that if the number of edges is odd, Waiter takes the last edge regardless of the game. Applying these ideas to the Maximum Degree games, one can see that the Maker-Breaker (Waiter-Client) Maximum Degree game is actually equivalent to the Maker-Breaker (Client-Waiter) Minimum Degree game in a very strong sense. Therefore our results do not only draw parallels between the Waiter-Client and Maker-Breaker Maximum Degree games, but also between the Client-Waiter and Maker-Breaker Minimum Degree games, which one might deem more natural (since Client tries to achieve a property rather than to avoid it).

In Section \ref{biased_section} we deal with the general biased case. For this purpose, let us define a function $f(n,q)$ as follows: $f(n,q) = \sup \{d | (d(q+1))^d < n^{d+1} \}$. We achieve tight asymptotic results for the critical $D$ for ``almost all" possible values of $q$, as given by the following three theorems:

\begin{restatable}{theorem}{largebiasinf}\label{large_bias_inf}
Suppose $q = \omega \left(\frac{n}{\ln n}\right)$ and also $q = n^{1+o(1)}$. Playing a Waiter-Client game on the edges of $K_n$ with bias $q$, Waiter has a strategy to force Client to have a vertex with degree at least $(1+o(1))f(n,q)$, while Client has a strategy to ensure all of his vertex degrees are at most $(1+o(1))f(n,q)$.
\end{restatable}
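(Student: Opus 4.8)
\noindent The plan is to exploit the probabilistic intuition throughout. With bias $q$ the game lasts $(1+o(1))\binom n2/(q+1)$ rounds, so Client's final graph has $m_0:=(1+o(1))\binom n2/(q+1)$ edges, and the two hypotheses on $q$ are precisely what make the game behave like $G(n,m_0)$: the condition $q=\omega(n/\ln n)$ places us in the regime in which the maximum degree strictly exceeds the average degree $\lambda:=n/(q+1)$, and $q=n^{1+o(1)}$ lets one absorb the $e^{\pm\Theta(\lambda)}$ and Stirling corrections while checking that the least $d$ for which $G(n,m_0)$ has, in expectation, fewer than one vertex of degree at least $d$ equals $(1+o(1))f(n,q)$. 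Both directions aim to show that perfect play reproduces $\Delta(G(n,m_0))$. I will write $m:=\lceil\sqrt{2(q+1)}\rceil$ (so $\binom m2\ge q+1$) and, for $j\ge0$, let $V_{\ge j}$, resp.\ $V_{=j}$, be the set of vertices of current Client-degree at least, resp.\ exactly, $j$; Client loses iff at some point $V_{\ge D}\neq\varnothing$.

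\noindent For Client's upper bound $\Delta\le(1+o(1))f$ I would have him play the balancing greedy: among the $q+1$ offered edges, claim one minimizing the larger of the two endpoint degrees (equivalently, one minimizing the increment of a convex potential $\Phi=\sum_v w_{\deg_C(v)}$). The feature to exploit, absent in Maker--Breaker, is that Client's degrees rise \emph{only} when Client himself claims an edge, so $V_{\ge j+1}$ can grow in a round only if \emph{every} one of the $q+1$ offered edges is incident to $V_{\ge j}$; as $K_n$ has at most $|V_{\ge j}|\cdot n$ such edges, these rounds are scarce, and a level-by-level induction on $j$ should bound $|V_{\ge j}|$ by $(1+o(1))$ times its value in $G(n,m_0)$, whence $V_{\ge D}=\varnothing$ for $D=(1+\varepsilon)f$. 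Matching the exponent coming out of this inequality with the defining relation $(d(q+1))^d<n^{d+1}$ of $f$ is the main computation here, and it is where the two hypotheses on $q$ re-enter, to absorb lower-order factors.

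\noindent For Waiter's lower bound $\Delta\ge(1-o(1))f$ I would proceed in phases $j=0,1,2,\dots$. In phase $j$, Waiter repeatedly offers $q+1$ unclaimed edges \emph{confined to} $V_{\ge j}$ --- possible while $|V_{\ge j}|\ge m$ and enough such edges remain; once $|V_{\ge j}|$ has dropped below $m$ (which happens around level $\tfrac12 f$, where the profile $\approx n\lambda^j/j!$ passes $\sqrt{q+1}$), he instead offers $q+1$ edges each incident to $V_{=j}$, which is possible as soon as $V_{=j}$ admits $q+1$ unclaimed incident edges. He runs phase $j$ for $t_j$ rounds, with $t_j$ chosen to match the growth of $|V_{\ge j+1}|$ in $G(n,m_0)$, i.e.\ $t_j\approx n\lambda^{j+1}/(j+1)!$; since $\sum_j t_j\le\binom n2/(q+1)$, the global edge budget is automatically met. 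Each phase should confront Client with a dilemma: every offered edge meets $V_{\ge j}$, so each round forces some vertex of degree $\ge j$ to gain an edge, and if Client \emph{spreads} these $t_j$ forced edges over distinct vertices then $|V_{\ge j+1}|\ge(1-o(1))t_j$ and the recursion continues, whereas if he \emph{clusters} them then some vertex already of degree $\ge j$ absorbs $\gtrsim D-j$ of them and reaches degree $\ge D$, a Waiter win. Iterating, after $(1-o(1))f$ phases either Waiter has already won by clustering, or $|V_{\ge(1-o(1))f}|$ is still large enough to support a confined offer, and one further such offer forces a vertex of Client-degree $(1-o(1))f$.

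\noindent The hard part will be the Waiter direction, since the dichotomy above is of use only if it is made quantitatively sharp. Concretely, one must: calibrate the phase lengths $t_j$ and the clustering threshold so that the forced degree comes out $(1-o(1))f$ rather than $(1-\Omega(1))f$; carry out the changeover from ``confine inside $V_{\ge j}$'' to ``confine to edges meeting $V_{=j}$'' --- as the frontier shrinks past $\sqrt q$ --- without losing a constant in the exponent; verify that at each phase enough unclaimed edges of the needed type have survived the earlier phases; and control the accumulation of the per-phase $1+o(1)$ errors across all $\Theta(f)$ phases. Getting all of these to close simultaneously at the $G(n,m_0)$ value is where essentially all of the work lies.
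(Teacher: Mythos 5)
Your approach differs from the paper's in both directions, and in both cases the sketch has gaps that are not merely technical.

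On Client's side, the paper does not need a hands-on frontier induction at all: it simply invokes the Beck/Erd\H{o}s--Selfridge criterion (Theorem~\ref{client_general_criteria}) with the winning sets being the $n\binom{n-1}{d}$ stars of size $d$, giving $n\binom{n-1}{d}(q+1)^{-d}<1$ as the condition for Client to avoid degree $d$; unwinding this is exactly where the crucial factor $1/d!$ comes from. Your proposed recursion ``each forced round at level $j$ consumes $q+1$ of the at most $|V_{\ge j}|\cdot n$ edges meeting $V_{\ge j}$'' yields only something like $|V_{\ge j+1}|\lesssim |V_{\ge j}|\cdot n/(q+1)=\lambda|V_{\ge j}|$, i.e.\ $|V_{\ge j}|\lesssim n\lambda^j$ with no factorial decay. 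This is too weak: in the allowed regime $\lambda=n/(q+1)$ can exceed $1$ (e.g.\ $q=n/\sqrt{\ln n}$ satisfies both hypotheses and gives $\lambda\approx\sqrt{\ln n}$, and even $q=n/2$ gives $\lambda=2$), so your recursion does not even contract, let alone terminate at $(1+o(1))f(n,q)$. To recover the $1/j!$ one needs a genuinely different accounting, which you do not supply --- or one simply uses the criterion as the paper does.

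On Waiter's side, the paper's strategy sidesteps the whole ``spread vs.\ cluster'' dichotomy. Waiter fixes in advance a set $A$ of size $r^d$ (with $r=\lceil 3d(q+1)/n\rceil$) and a disjoint reservoir $B$, and plays $d$ mega-rounds: in mega-round $i$ he partitions the surviving set $A_i$ (of size $r^{d-i+1}$) into groups of $r$, offers $\lceil(q+1)/r\rceil$ edges from each vertex of a group to $B$, and whichever vertex Client hits survives to $A_{i+1}$. Each surviving vertex gains exactly one more Client-edge per mega-round, and the budget of free edges to $B$ per vertex is controlled by $\lceil(q+1)/r\rceil<n/(2d)$, so the recursion closes deterministically and the sole survivor after $d$ rounds has degree $d$. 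Your plan instead tries to track the entire degree profile $|V_{\ge j}|\approx n\lambda^j/j!$ across $\Theta(f)$ phases and to threaten Client with a cluster/spread dilemma; but that dichotomy is not exhaustive (Client can distribute the $t_j$ increments over $\sqrt{t_j}$ vertices and fall into neither branch cleanly), and you yourself list four separate unresolved issues --- phase-length calibration, the changeover below $\sqrt q$, survival of enough free edges, and error accumulation --- and acknowledge that ``essentially all of the work lies'' in closing them. As written, neither the Client bound nor the Waiter bound is established; the paper's route via the star-criterion and the $r^d$-tournament reduction is both shorter and the one that actually closes.

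Finally, note that the paper's statement with $f(n,q)$ requires a separate (short but nontrivial) calculation showing $D_n^q=(1+o(1))d_n^q=(1+o(1))f(n,q)$ under the two hypotheses on $q$; this is where the assumptions $q=\omega(n/\ln n)$ and $q=n^{1+o(1)}$ are actually used, and your proposal alludes to this matching step but does not carry it out.
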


Perhaps one of the most natural cases studied for Waiter-Client games is when $q = \Theta(n)$. Theorem \ref{large_bias_inf} solves this case in particular, and yields that for $q = cn$ for some $c > 0$ the maximum degree of Client's graph in the outcome of the Waiter-Client Maximum Degree game achieved by perfect play is $(1+o(1))\frac{\ln n}{\ln \ln n}$.

The next theorem addresses the case of a very large bias $q(n)$, much larger than $\Theta(n)$:

\begin{restatable}{theorem}{largebiasconst}\label{large_bias_const}
Let $d \geq 1$ be a natural number. There are constants $C_d, c_d > 0$ such that:

\begin{itemize}
\item Playing with bias $q \geq C_d n^{1 + \frac{1}{d}}$, Client has a strategy to ensure all of his vertices' degrees are at most $d - 1$.
\item Playing with bias $q \leq c_d n^{1 + \frac{1}{d}}$, Waiter has a strategy to force Client to have a vertex with degree at least $d$.
\end{itemize}
\end{restatable}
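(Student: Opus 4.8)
For Theorem \ref{large_bias_const}, the plan is to treat the two bullet points separately, since they require different techniques.

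For the first bullet point (Client's strategy with bias $q \geq C_d n^{1+1/d}$), the plan is to use a potential function argument of the Erd\H{o}s--Selfridge flavour, adapted to the Waiter-Client setting. The relevant ``winning sets'' for Waiter here are the stars $K_{1,d}$ centered at each vertex, i.e. all sets of $d$ edges incident to a common vertex $v$. Client wants to avoid claiming any such set in full. The natural potential after $i$ rounds would be something like $\Phi_i = \sum_{v} \sum_{S} (q+1)^{-(d - |S \cap \text{Client's edges}|)}$ summed over all $d$-subsets $S$ of the $n-1$ edges at $v$ that are still ``alive'' (i.e. contain no Waiter edge), penalizing heavily those close to completion. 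The key point is that in a Waiter-Client game, when Waiter offers $q+1$ edges, Client can choose which one to take, and a standard averaging argument shows Client can always pick the offered edge that minimizes the resulting potential increase; because there are $q+1$ options and taking an edge for Waiter only helps Client, Client can keep the potential from growing. One then checks that the initial potential is $n \binom{n-1}{d} (q+1)^{-d} < n \cdot n^d / (q+1)^d \le C_d^{-d} < 1$ for $C_d$ chosen appropriately, so no winning set is ever completed. I would need to be a bit careful about the ``leftover'' edges Waiter grabs at the end and about whether Client is ever forced to take an edge completing a set, but the standard Waiter-Client potential lemma (which is surely among the tools in Section \ref{preliminaries_section}) should handle this cleanly.

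For the second bullet point (Waiter's strategy with bias $q \leq c_d n^{1+1/d}$), the plan is an explicit degree-building strategy: Waiter focuses attention on a shrinking pool of ``target'' vertices and repeatedly offers Client tuples of edges within this pool so that, no matter which edge Client declines, Client is forced to accept an edge incident to a target vertex. Concretely, one maintains a set $V_t$ of surviving target vertices together with the subgraph of Client's claimed edges inside $V_t$, and iterates: as long as $|V_t|$ is large and total edges among $V_t$ are plentiful, Waiter offers $q+1$ edges all lying inside $V_t$; Client claims one of them, contributing to degrees inside $V_t$. After enough rounds a counting argument (sum of Client's degrees inside $V_t$ grows by one per round, but we only removed a controlled number of vertices/edges) forces some vertex to reach degree $d$. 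The budget works out because having bias $q$ means Waiter ``wastes'' $q$ edges per forced Client move, and with roughly $n^{1+1/d}$ for $q$ and about $n$ rounds needed, the total edges consumed is $O(n^{2+1/d}/\ldots)$ — I would set up the recursion so that at stage $j$ the pool has size $\sim n / (\text{const})^j$ and Client has been forced to accumulate degree $j$ somewhere, terminating at $j = d$ exactly when the pool is still nonempty provided $q \le c_d n^{1+1/d}$.

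The main obstacle I anticipate is the lower-bound (Waiter) side: controlling the interplay between the bias $q$ (how many edges Waiter must ``burn'' per forced move) and the number of rounds needed to concentrate degree $d$ at a single vertex, while ensuring Waiter never runs out of unoffered edges inside the shrinking target pool. One has to be careful that Client cannot ``spread out'' his forced acceptances across many vertices so that none reaches degree $d$; the fix is to have Waiter only ever offer edges touching a small distinguished set and to re-select that set greedily so that Client's accumulated edges are always heavily concentrated. Getting the constants $c_d$ right (they will inevitably be exponentially small in $d$, but that is fine for the statement) and checking the ``at least $\binom{|V_t|}{2} \ge q+1$ unoffered edges remain'' invariant throughout are the routine-but-fiddly parts; the potential argument for Client's side, by contrast, I expect to be essentially a direct application of the general Waiter-Client Erd\H{o}s--Selfridge-type criterion.
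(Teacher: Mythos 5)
Your treatment of the Client side is essentially the paper's proof. The paper uses exactly the Erd\H{o}s--Selfridge criterion for Waiter-Client games (Theorem~\ref{client_general_criteria}) with the winning family consisting of all $d$-subsets of edges incident to a vertex, giving $n\binom{n-1}{d}$ sets of size $d$ and the same initial-potential computation $n\binom{n-1}{d}(q+1)^{-d} < n(ne/(d(q+1)))^d < 1$ for $q$ large (this is Proposition~\ref{large_bias_client}). Nothing to add there.

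The Waiter side, however, has a genuine gap. Your plan is to offer $q+1$ edges \emph{inside} a shrinking pool $V_t$, and then appeal to a counting argument on the sum of Client's degrees inside $V_t$. This does not close: the number of Client edges inside $V_t$ cannot exceed $\binom{|V_t|}{2}/(q+1)\lesssim |V_t|^2/q$, while to force some vertex to degree $d$ one would need this quantity to exceed roughly $(d-1)|V_t|/2$, i.e. $|V_t|\gtrsim d q$. With $q$ of order $n^{1+1/d}$ and $|V_t|\le n$, this is impossible; an average-degree argument alone is hopeless. You acknowledge that Client could ``spread out'' and propose to fix it by a greedy re-selection of a distinguished set, but that is the part that actually needs a mechanism, and you don't supply one. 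Your recursion parameters are also off: you want the pool to shrink by a \emph{constant} factor per stage, but then after $d$ stages the pool still has size $\Theta(n)$ with no vertex concentrated. The paper's Lemma~\ref{large_bias_waiter} resolves both issues at once by (i) making the offering structure bipartite --- a small target set $A$ versus the large complement $B$, with all offered edges going from $A$ to $B$, so Client's chosen edge increments the degree of exactly one vertex of $A$ and never wastes budget on edges among targets --- and (ii) partitioning $A_i$ into groups of size $r=\lceil 3d(q+1)/n\rceil\approx n^{1/d}$ and, for each group, offering $\lceil (q+1)/r\rceil$ edges from each group member to $B$; whichever edge Client takes, exactly one group member advances with degree incremented by one. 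The pool thus shrinks by the \emph{non-constant} factor $r\approx n^{1/d}$ per mega-round, reaching a single vertex of degree $d$ after $d$ mega-rounds. This group/bipartite mechanism is the missing ingredient; without it the concentration you need does not happen.
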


Finally, we present a theorem that deals with relatively small bias $q(n)$:

\begin{restatable}{theorem}{smallbias}\label{small_bias_maximum_degree}
Let $q = o \left(\frac{n}{\ln n}\right)$. Playing a Waiter-Client game on the edges of $K_n$ with bias $q$, Waiter has a strategy to force Client to have a vertex with degree at least $(1+o(1))\frac{n}{q+1}$, while Client has a strategy to ensure all of his vertices' degrees are at most $(1+o(1))\frac{n}{q+1}$.
\end{restatable}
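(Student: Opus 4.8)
The plan is to prove matching upper and lower bounds, both hovering around $\frac{n}{q+1}$, which is the "fair share" of the degree of a vertex: in $K_n$ each vertex has degree $n-1$, and Client picks roughly a $\frac{1}{q+1}$-fraction of the edges, so heuristically Client should be forced to (and be able to avoid much more than) degree $\frac{n}{q+1}$. Note that $q = o(n/\ln n)$ means $\frac{n}{q+1} = \omega(\ln n)$, which is the regime where concentration/potential arguments give only lower-order corrections, hence the clean $(1+o(1))$ statement.

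\textbf{Client's strategy (upper bound).} First I would have Client use a potential-function (Erd\H{o}s--Selfridge-type) argument adapted to Waiter-Client games, as surely reviewed in Section~\ref{preliminaries_section}. For a target degree $D = (1+\varepsilon)\frac{n}{q+1}$ with $\varepsilon \to 0$ slowly, assign to each vertex $v$ a weight $\lambda^{d_C(v)}$ for a suitable base $\lambda = \lambda(n,q) > 1$, and let the potential be $\Phi = \sum_v \lambda^{d_C(v)}$, summed only over vertices that have not yet "died" (reached the forbidden degree or become safe). In each round Waiter offers $q+1$ edges; Client claims the one maximizing the resulting drop in $\Phi$ (equivalently, Waiter is forced to claim $q$ edges, raising the potential, but Client's choice of which single edge to keep lets him control the increase). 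A standard averaging shows Client can keep $\Phi$ from ever reaching $\lambda^D$, provided $\lambda^D > \Phi_{\text{start}} \cdot (\text{growth factor per edge})^{\#\text{edges}}$; choosing $\lambda = 1 + \Theta\!\big(\sqrt{(q+1)\ln n / n}\big)$ and using $\binom{n}{2}/(q+1)$ rounds balances the terms so that $D = (1+o(1))\frac{n}{q+1}$ suffices. The technical heart is bounding the per-round potential increase: because Waiter takes $q$ of the $q+1$ offered edges, the relevant inequality involves $q$ terms, and one must exploit the constraint that these $q+1$ edges are distinct and were not previously offered to argue that Client's optimal pick reduces the average growth by the needed factor $\tfrac{1}{q+1}$.

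\textbf{Waiter's strategy (lower bound).} For the matching lower bound I would fix a single vertex $v$ (or a small set of disjoint candidate vertices to boost the probability-style argument) and have Waiter repeatedly offer $q+1$ edges all incident to $v$ that have not yet been offered; Client is forced to take one such edge, so after $k$ such rounds $d_C(v) \geq k$. Since $v$ has $n-1$ incident edges, Waiter can run roughly $\frac{n-1}{q+1}$ such rounds before running out, forcing $d_C(v) \geq (1-o(1))\frac{n}{q+1}$. To push from $(1-o(1))$ up to the same constant $1$ as the upper bound, I would instead have Waiter work with a growing family of high-degree vertices and use a pairing/potential argument dual to Client's: maintain the "dual potential" $\sum_v \mu^{-d_C(v)}$ and force it down, or more concretely pick at each stage the vertex of current maximum Client-degree among those with offerable incident edges and feed Client edges at it, showing the max degree tracks $\frac{n}{q+1}$ with only an additive $o(n/(q+1))$ loss from the edges "wasted" when a chosen vertex runs out of fresh incident edges. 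The hint that the intended bound is tight to the same $(1+o(1))\frac{n}{q+1}$ on both sides suggests Waiter's argument must also be a weight-function argument rather than the crude single-vertex one.

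\textbf{Main obstacle.} I expect the real difficulty to be the Waiter side: forcing a vertex to degree genuinely close to $\frac{n}{q+1}$ (not just $\frac{n-1}{q+1}$ minus slack, which is already $(1-o(1))\frac{n}{q+1}$ and hence fine — so actually the crude argument may suffice here!). On reflection, since $\frac{n-1}{q+1} = (1-o(1))\frac{n}{q+1}$, the naive single-vertex strategy already gives the lower bound, so the genuine obstacle is entirely on Client's side: making the potential-function computation work uniformly over the whole range $q = o(n/\ln n)$, in particular controlling the base $\lambda$ so that simultaneously $\lambda^D$ is large enough and the cumulative multiplicative growth over all $\sim \binom{n}{2}/(q+1)$ rounds stays below it, which forces a delicate choice $\varepsilon = \Theta\!\big(\sqrt{\tfrac{(q+1)\ln n}{n}}\big) = o(1)$ and a second-order expansion of $(1+\varepsilon)$-type factors that must not blow up when $q$ is as large as $\tfrac{n}{\ln n}$ allows.
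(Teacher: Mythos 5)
Your analysis of Waiter's side is fine and essentially matches the paper, which in fact uses an even simpler argument than your single-vertex attack: regardless of strategies Client ends the game with exactly $\left\lfloor \binom{n}{2}/(q+1) \right\rfloor$ edges, so the average Client-degree is $(1+o(1))\frac{n}{q+1}$ and hence some vertex attains it. Your choice of scale $\varepsilon = \Theta\bigl(\sqrt{(q+1)\ln n/n}\bigr)$ for the error also matches the paper's parameter $\varepsilon = 4\sqrt{(q+1)\ln n/n}$.

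The genuine gap is on Client's side, in the form of the potential. The one-sided potential $\Phi = \sum_v \lambda^{d_C(v)}$ is monotone nondecreasing in time (Client degrees never drop), so Client has no mechanism to keep it below a fixed threshold $\lambda^D$: whenever Waiter offers $q+1$ edges all incident to the current maximum-degree vertex $v$, the increase is at least $(\lambda-1)\lambda^{d_C(v)}$ no matter which edge Client picks, and a bound of the form $\Phi_{t+1} \le \bigl(1+\Theta(\lambda-1)\bigr)\Phi_t$ (which is what "min $\le$ average" over the $q+1$ offers gives) compounded over the $\Theta\bigl(n^2/(q+1)\bigr)$ rounds blows up far past $\lambda^D$ whenever $q+1 \ll n$. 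Concretely, a potential argument built only on Client's degrees reproduces the Erd\H{o}s--Selfridge-type criterion of Theorem~\ref{client_general_criteria}, which for this game (Proposition~\ref{large_bias_client}) only gives max degree $\lesssim e\cdot\frac{n}{q+1}$, not $(1+o(1))\frac{n}{q+1}$. The missing idea, and the heart of the paper's proof via Theorem~\ref{biased_general_potential_framework}, is a two-sided (Beck-style) potential
\[
\psi_i(A) \;=\; (1+\alpha)^{\,|C_i\cap A|-\frac{1+\varepsilon}{q+1}|A|}\,(1-\beta)^{\,|W_i\cap A|-\frac{q-\varepsilon}{q+1}|A|},
\]
whose second factor \emph{decreases} the weight of a vertex-star $A=E_v$ every time Waiter claims an edge at $v$. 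That decrease is exactly what offsets the $(1+\alpha)$ growth from Client's claims: every round Waiter takes $q$ edges for Client's one, and the inequality $E^+\le qE^-$ packages the resulting cancellation so that Client's greedy choice keeps $\Psi$ nonincreasing. Without the $(1-\beta)^{|W_i\cap A|}$ factor you have no way to exploit the fact that a high Client-degree at $v$ forces a much higher Waiter-degree at $v$, and the argument cannot close to a $(1+o(1))$ bound.
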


These theorems indicate that the biased Waiter-Client Maximum Degree game is another successful example of the probabilistic intuition, as its perfect play outcome behaves precisely like its random play outcome. We expand on this matter in Section \ref{biased_section}. 

We also prove a result for the most significant case not covered by the previous theorems:

\begin{restatable}{proposition}{mediumbias}
Suppose $q = c\frac{n}{\ln n}$ for some constant $c > 0$. Then Waiter can force Client to have maximum degree at least $(1+o(1))\frac{n}{q+1}$, while Client can ensure his maximum degree is at most $C\frac{n}{q+1}$ for some $C > 0$ (which depends only on $c$).
\end{restatable}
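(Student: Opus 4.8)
The plan is to handle the two directions separately; both are short given the machinery already set up in the paper.

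\emph{Lower bound.} Here nothing new is needed: Waiter just concentrates on one vertex. Fix $v$, and as long as at least $q+1$ of the $n-1$ edges at $v$ have not yet been offered, let Waiter offer $q+1$ of them (and play arbitrarily afterwards). In each of the first $\lfloor (n-1)/(q+1)\rfloor$ rounds Client is forced to take an edge at $v$, so $d_C(v)\ge \lfloor (n-1)/(q+1)\rfloor$. Since $q=c\,n/\ln n$ gives $n/(q+1)=(1+o(1))\ln n/c\to\infty$, this yields $d_C(v)\ge (1+o(1))\,n/(q+1)$, the claimed bound. (This is exactly the strategy behind the lower bound of Theorem~\ref{small_bias_maximum_degree}, and it only uses $q=o(n)$.)

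\emph{Upper bound.} I would invoke the Erd\H{o}s--Selfridge-type sufficient condition for Client in Waiter--Client games reviewed in Section~\ref{preliminaries_section}: in a Waiter--Client game with bias $q$ and winning-set family $\mathcal F$, Client has a strategy never to claim a full member of $\mathcal F$ provided $\sum_{A\in\mathcal F}(q+1)^{-|A|}$ is below the relevant absolute constant (say $1/2$). Apply this with $D:=\lceil C\,n/(q+1)\rceil$, where $C=C(c)$ will be chosen large, and with $\mathcal F$ the family of all $D$-element edge sets contained in the star of some vertex of $K_n$; Client avoids every member of $\mathcal F$ exactly when the maximum degree of his graph stays below $D$. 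Every $A\in\mathcal F$ has $|A|=D$ and $|\mathcal F|\le n\binom{n-1}{D}$, so it suffices to verify $n\binom{n-1}{D}(q+1)^{-D}<1/2$. Using $\binom{n-1}{D}\le(en/D)^D$ and $D(q+1)\ge Cn$,
\[
n\binom{n-1}{D}(q+1)^{-D}\ \le\ n\Bigl(\frac{en}{D(q+1)}\Bigr)^{D}\ \le\ n\,(e/C)^{D},
\]
which is $<1/2$ once $D\ln(C/e)>\ln(2n)$. Since $D\ge C\,n/(q+1)=(1+o(1))\,(C/c)\ln n$, this holds for all large $n$ as soon as $C$ is fixed (depending only on $c$) with $(C/c)\ln(C/e)>1$; such a $C$ exists because $C\ln(C/e)\to\infty$. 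Hence Client can keep his maximum degree at most $D-1\le C\,n/(q+1)$, as required.

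There is no real obstacle here: the proposition is a corollary of the general Waiter--Client avoidance criterion plus a single binomial estimate, and the only thing to get right is the dependence $C=C(c)$ together with the bookkeeping of the $(1+o(1))$ separating $q+1$ from $c\,n/\ln n$. The substantive point is conceptual rather than technical: $q=\Theta(n/\ln n)$ is precisely the transitional regime where this criterion is tight only up to a constant factor in $D$, so — unlike in Theorems~\ref{small_bias_maximum_degree} and~\ref{large_bias_inf} — one cannot pin down the leading constant of the critical $D$, and this is exactly the constant-factor gap recorded in the statement. Note also that $\Theta(n/(q+1))=\Theta(\ln n)$ is the order of $\Delta\bigl(G(n,m)\bigr)$ for $m=\lfloor\binom n2/(q+1)\rfloor$ in this regime, consistent with the probabilistic intuition discussed in the introduction.
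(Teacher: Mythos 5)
Your proposal is correct. The upper bound argument is essentially identical to the paper's: both apply the Erd\H{o}s--Selfridge-type criterion of Theorem~\ref{client_general_criteria} (the paper does it via Proposition~\ref{large_bias_client}, you redo the binomial estimate inline), and your condition $(C/c)\ln(C/e)\ge 1$ is the same as the paper's $\frac{C(\ln C - 1)}{c}\ge 1$. The only cosmetic difference is that you quote the threshold as $1/2$ when Theorem~\ref{client_general_criteria} in fact allows $<1$, which just makes your constant marginally more conservative.

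For the lower bound you take a genuinely different (and slightly less economical) route. The paper does not give Waiter any strategy at all: it observes, in the discussion following Theorem~\ref{small_bias_maximum_degree}, that Client's final graph has exactly $\lfloor n(n-1)/(2(q+1))\rfloor$ edges regardless of play, so the average degree is $(1+o(1))\,n/(q+1)$ and hence some vertex has at least that degree. You instead have Waiter fix a vertex $v$ and repeatedly offer $q+1$ free edges at $v$, forcing $d_C(v)\ge\lfloor (n-1)/(q+1)\rfloor=(1+o(1))n/(q+1)$. Both give the same bound; your argument is more constructive (it names the forced vertex and only uses $q=o(n)$), while the paper's is shorter and makes it transparent that the bound is strategy-independent. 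One small misattribution: you say this ``is exactly the strategy behind the lower bound of Theorem~\ref{small_bias_maximum_degree},'' but the paper proves that lower bound by the same averaging argument, not by concentrating on one vertex.
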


\section{Preliminaries} \label{preliminaries_section}

We use standard graph-theoretic notation, in particular $\Delta(G)$ is the maximum degree of a graph $G$ and $\delta(G)$ is the minimum degree of a graph $G$. We may write $\Delta, \delta$ when the graph $G$ is clear from the context.

For any Waiter-Client game on the edges of $K_n$:
\\ Let $E^F_i, E^C_i, E^W_i$ denote the free edges, Client's edges and Waiter's edges after round $i$, respectively. 
\\ Let $d_C^i(v)$ ($d_W^i(v)$) denote the degree of $v$ in Client's (Waiter's) graph after round $i$. Often we will omit $i$ when it is obvious from the context.
\\ Let $G_C^i$ ($G_W^i$) be the graph consisting of Client's (Waiter's) edges after round $i$. Often we will omit $i$ when it is obvious from the context.

We will use the following general and quite useful result regarding martingales:

\begin{theorem}[Azuma's Inequality, see, e.g.,  \cite{The Probabilistic Method}]
Let $0 = X_0, \dots, X_m$ be a martingale which satisfies 
\[|X_{i+1} - X_i| \leq 1\]
for all $0 \leq i < m$. Let $\lambda > 0$ be arbitrary. Then 
\[\Pr(X_m > \lambda\sqrt{m}) < e^{-\lambda^2/2}. \]
\end{theorem}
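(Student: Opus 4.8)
\emph{Proof plan.} I would treat the two bounds separately; the lower bound is essentially free, and the upper bound is a standard Erdős–Selfridge-type argument with parameters tuned to this regime. For Waiter's bound there is no need to exhibit a strategy at all: Client claims exactly one of the $q+1$ offered edges in each round, so regardless of how either player plays, at the end of the game Client's graph has exactly $\lfloor\binom n2/(q+1)\rfloor$ edges and hence a vertex of degree at least $\tfrac 2n\lfloor\binom n2/(q+1)\rfloor\ge\tfrac{n-1}{q+1}-\tfrac 2n=(1+o(1))\tfrac n{q+1}$, the last step using $q+1=\Theta(n/\ln n)$. Since this holds for every play, Waiter (playing arbitrarily) forces it.

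For Client's bound, fix a large constant $C=C(c)$ to be pinned down below, set $D:=\lceil C\,n/(q+1)\rceil$, and regard the game as a Waiter--Client game whose winning sets are all $D$-element sets of edges sharing a common endpoint, so that Client loses exactly when some vertex of his graph reaches degree $D$. With $w:=q+1$, define after round $i$
\[\Phi_i:=\sum_{\substack{A\ \text{winning}\\ A\cap E^W_i=\emptyset}}w^{\,|A\cap E^C_i|-|A|},\qquad\text{so that}\quad\Phi_0=n\binom{n-1}{D}w^{-D}.\]
Client's rule: on being offered $f_1,\dots,f_{q+1}$, for each $j$ let $P_j$ be the sum of $w^{\,|A\cap E^C_i|-|A|}$ over the live winning sets $A$ with $f_j\in A$ and $f_{j'}\notin A$ for every $j'\neq j$, and claim an offered edge minimizing $P_j$. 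The step I expect to require the most care is checking that this keeps $\Phi$ non-increasing: when Client takes $f_c$ (with $P_c$ minimum) and Waiter takes the other $q$ edges, every live winning set meeting $\{f_j:j\neq c\}$ is killed and loses its weight, every live winning set containing $f_c$ but no other $f_j$ has its exponent rise by one and its weight multiplied by $w$, and all other sets are untouched; since the latter family is disjoint from the former, and the former contains the pairwise disjoint families measured by the $P_j$ with $j\neq c$, one obtains $\Phi_{i+1}-\Phi_i\le(w-1)P_c-\sum_{j\neq c}P_j\le(w-1-q)P_c=0$. (Working with the \emph{private} weighted degrees $P_j$ rather than the ordinary ones is exactly what makes the killed weight dominate the created weight with no co-degree correction terms; and the final move, in which Waiter scoops up the fewer than $q+1$ leftover edges, only kills sets.) Consequently $\Phi$ never exceeds $\Phi_0$.

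To finish, note that if Client's graph ever had $D$ edges at a vertex, the all-Client $D$-star there would be a live winning set of weight $w^0=1$, forcing $\Phi\ge1$; hence it suffices that $\Phi_0<1$. Using $\binom{n-1}{D}\le(e(n-1)/D)^D$ and $D(q+1)\ge Cn$,
\[\Phi_0\le n\left(\frac{e(n-1)}{D(q+1)}\right)^{D}<n\left(\frac{e}{C}\right)^{D},\]
and since $D\ge C\,n/(q+1)=(1+o(1))\,C\ln n/c$, this falls below $1$ for $n$ large once $C\ln(C/e)>c$, which holds for $C=C(c)$ large enough (for instance $C=e^{1+c}$). Thus Client's maximum degree never reaches $D$, so it stays below $C\,n/(q+1)$, proving the second bound with a constant depending only on $c$. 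The honest caveat is that the loss of a multiplicative constant is forced by Erdős–Selfridge demanding $w\le q+1$: this leaves only the slack $\tfrac{\ln n}{D}$ in the estimate $\Phi_0<1$, which is $\omega(1)$ (and can be refined toward the optimal $1+o(1)$ of Theorem~\ref{small_bias_maximum_degree}) when $q=o(n/\ln n)$, but is merely $O(1)$ at $q=\Theta(n/\ln n)$.
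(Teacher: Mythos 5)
There is a fundamental mismatch here: the statement you were asked to prove is Azuma's inequality --- a concentration bound for a martingale $0=X_0,\dots,X_m$ with bounded differences $|X_{i+1}-X_i|\leq 1$, asserting $\Pr(X_m>\lambda\sqrt m)<e^{-\lambda^2/2}$ --- but your proposal proves something else entirely, namely the paper's proposition about the Waiter--Client Maximum Degree game at bias $q=c\,n/\ln n$ (Waiter forces degree $(1+o(1))\tfrac n{q+1}$ by an edge-count/average-degree argument; Client keeps the degree below $C\tfrac n{q+1}$ via an Erd\H{o}s--Selfridge-type potential, which is in spirit the paper's Theorem \ref{client_general_criteria}). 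Nothing in your write-up engages with the actual statement: there is no martingale, no use of the bounded-difference hypothesis, and no derivation of a sub-Gaussian tail bound, so the theorem in question remains unproved by your argument, however reasonable that argument may be for the other proposition.

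For the record, the paper does not prove this theorem either; it is quoted as a standard result (see, e.g., the book of Alon and Spencer) and is used later in the analysis of the Advantage Process (Proposition \ref{tail_bound}). If you did want to supply a proof, the standard route is the exponential moment method: for $t>0$, convexity of $x\mapsto e^{tx}$ on $[-1,1]$ gives $\mathbb{E}\left[e^{t(X_{i+1}-X_i)}\mid X_0,\dots,X_i\right]\leq\cosh t\leq e^{t^2/2}$, so by iterating, $\mathbb{E}\left[e^{tX_m}\right]\leq e^{mt^2/2}$; Markov's inequality then yields $\Pr(X_m>\lambda\sqrt m)\leq e^{-t\lambda\sqrt m+mt^2/2}$, and choosing $t=\lambda/\sqrt m$ gives the bound $e^{-\lambda^2/2}$. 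None of these ingredients can be extracted from your potential-function argument, so this is not a case of a different but valid route --- it is a proof of the wrong statement.
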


We now state a result of Beck, which adapts the known Erd\H{o}s Selfridge  general criteria for Breaker's win in a Maker-Breaker game to a Waiter-Client game:

\begin{theorem}[Implicit in \cite{Beck Monograph}]\label{client_general_criteria}
Let $q$ be a positive integer, let $X$ be a finite set, let $\mathcal{F}$ be a family of (not necessarily distinct) subsets of $X$ and suppose $\sum_{A \in \mathcal{F}} (q+1)^{-|A|} < 1$. Then, when playing the $(1 : q)$ Waiter-Client game $(X, \mathcal{F})$, Client has a winning strategy.
\end{theorem}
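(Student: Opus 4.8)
The plan is to adapt the weight-function (potential) method of Erd\H{o}s and Selfridge, the new feature being that Client, unlike Maker in a Maker--Breaker game, gets to \emph{choose} which of the offered elements to take, and this freedom is exactly what compensates for the absence of genuine alternation.

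Call a set $A \in \mathcal{F}$ \emph{alive} at a given moment if Waiter has claimed none of its elements (so Client could still conceivably complete $A$), and for an alive set let $a(A)$ denote the number of elements of $A$ that are still free. Define the potential
\[ \Phi := \sum_{\substack{A \in \mathcal{F} \\ A \text{ alive}}} (q+1)^{-a(A)}. \]
At the start every set is alive and $a(A) = |A|$, so $\Phi = \sum_{A \in \mathcal{F}}(q+1)^{-|A|} < 1$ by hypothesis. On the other hand, if at some point Client has completed a set $A \in \mathcal{F}$, then $A$ is alive with $a(A) = 0$, so $\Phi \ge (q+1)^{0} = 1$ (all contributions being positive). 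Hence it suffices to exhibit a Client strategy keeping $\Phi$ non-increasing: Client simply claims, among the $q+1$ offered (necessarily free) elements $x_1, \dots, x_{q+1}$, one that minimizes the resulting value of $\Phi$.

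The heart of the argument is to show that this minimum is at most the current value of $\Phi$. Fix a round with offered elements $x_1, \dots, x_{q+1}$, and for $j \in \{1, \dots, q+1\}$ let $\Delta\Phi_j$ be the change in $\Phi$ if Client takes $x_j$ and Waiter takes the other $q$ elements. An alive set is affected only if it meets $\{x_1, \dots, x_{q+1}\}$; classify such sets by the number $k \ge 1$ of offered elements they contain. If $A$ contains exactly one offered element $x_m$: taking $x_j$ with $j = m$ leaves $A$ alive but drops $a(A)$ by one, changing its contribution by $+q\,(q+1)^{-a(A)}$, while taking $x_j$ with $j \ne m$ puts $x_m \in A$ into Waiter's hands and kills $A$, changing its contribution by $-(q+1)^{-a(A)}$; summing over all $q+1$ choices of $j$ gives $q\,(q+1)^{-a(A)} - q\,(q+1)^{-a(A)} = 0$. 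If $A$ contains $k \ge 2$ offered elements: for every choice of $j$ some offered element of $A$ goes to Waiter, so $A$ dies and its contribution drops by $(q+1)^{-a(A)}$, and $A$ is never among the sets whose contribution rises; summing over $j$ gives $-(q+1)(q+1)^{-a(A)} \le 0$. Adding up over all affected alive sets yields $\sum_{j=1}^{q+1}\Delta\Phi_j \le 0$, so some $j$ has $\Delta\Phi_j \le 0$, and Client takes that $x_j$.

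It remains to handle the last move, where fewer than $q+1$ free elements remain and Waiter claims all of them: this can only turn alive sets dead, so $\Phi$ does not increase. Therefore $\Phi$ stays strictly below $1$ throughout, and by the observation above Client never completes a set of $\mathcal{F}$, i.e., Client wins. (That the members of $\mathcal{F}$ need not be distinct is harmless: all sums over $\mathcal{F}$ are taken with multiplicity.) The one step demanding care is the bookkeeping in the averaging argument --- in particular confirming that sets containing two or more of the offered elements, whose overlaps one might fear would hurt Client, in fact contribute \emph{negatively} to $\sum_{j}\Delta\Phi_j$ --- and this is precisely where Client's freedom to choose gets converted into the Erd\H{o}s--Selfridge-type bound.
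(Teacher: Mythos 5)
Your proof is correct, and it is essentially the standard argument: the paper itself does not prove this criterion (it attributes it to Beck and points to the cited short explicit proof), and that proof is exactly your Erd\H{o}s--Selfridge-type potential $\sum_{A\ \text{untouched by Waiter}} (q+1)^{-\#\{\text{free elements of }A\}}$ with Client choosing the offered element minimizing the potential, verified by the same averaging over the $q+1$ possible choices. Your bookkeeping in the two cases (one offered element in $A$ versus at least two) and the treatment of Waiter's final move are all sound, so nothing is missing.
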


For a short explicit proof of Theorem \ref{client_general_criteria} one can look in \cite{Waiter-Client Planarity}.

\section{Unbiased Case} \label{unbiased_section}

In this section we will prove Theorem \ref{max_deg_upper} and Theorem \ref{max_deg_lower}.

\subsection{Upper Bound}

In this subsection we will prove a generalization of Theorem \ref{max_deg_upper}. We will work in a general Unbalancer-Balancer framework (adapted to the Waiter-Client rules), and present a strategy which is applicable for many games. We rely heavily on ideas from \cite{Discrepancy Games}, adapted to Waiter-Client style games.
First, we define what is such a game:

Given a hypergraph $H = (V,E)$ with $E = \{e_1, \dots, e_m\}$ and two threshold vectors $\textbf{l} = (l_1, \dots, l_m)$ and $\textbf{h} = (h_1, \dots, h_m)$ of non-negative integers, we define the \textit{Unbalancer-Balancer game} on $H$ with thresholds $\textbf{l}, \textbf{h}$, denoted by $UB(H, \textbf{l}, \textbf{h})$, as follows: in every round, Unbalancer offers two elements $v_1, v_2 \in V$ which have not already been claimed by any player. Balancer then chooses one of them, claims it, and Unbalancer claims the other element. The game ends when there are less than 2 unclaimed elements (if there is one such element, it stays unclaimed). Let $B$ and $U$ be the subsets claimed by Balancer and Unbalancer, respectively, at the end of the game. If for every edge $e_j \in E$ it holds that $-l_j \leq |B \cap e_j| - |U \cap e_j| \leq h_j$, Balancer wins. Otherwise, Unbalancer wins.

It is appropriate to note that in contrast to \cite{Discrepancy Games}, which defined the \textit{Balancer-Unbalancer} games analogously to Maker-Breaker games in some sense, we defined it differently and analogously to Waiter-Client games in some sense. Also, we properly changed the order of the names, calling it an \textit{Unbalancer-Balancer} game, as Unbalancer resembles Waiter, and Balancer resembles Client.

Another (equivalent) way of looking at the game is by using a labeling function $f: V\rightarrow \{-1,0,1\}$. At the start of the game, we let $f(v) = 0$ for all $v \in V$. When Balancer claims an element $v$, we update $f(v) = 1$. Similarly, when Unbalancer claims an element $u$, we update $f(u) = -1$. At the end of the game, Balancer wins if and only if for every $e_j \in E$, it holds that $-l_j \leq \sum_{v \in e_j} f(v) \leq h_j$.

Informally speaking, Balancer's goal is to balance every edge $e \in E$, making sure that both players have about the same number of elements claimed in that edge. The exact meaning of ``about the same number" is given by $\textbf{l}, \textbf{h}$. The link to the Waiter-Client maximum degree game is straightforward, and will be formalized when proving Theorem \ref{max_deg_upper}.

Our main result for Unbalancer-Balancer games is the following:

\begin{theorem}\label{general_UB}
Let $H = (V,E)$ be a hypergraph with $E = \{e_1, \dots, e_m\}$, and let $\textbf{l} = (l_1, \dots, l_m), \textbf{h} = (h_1, \dots, h_m)$  be two threshold vectors of non-negative integers. Then Balancer has a winning strategy for $UB(H,\textbf{l}, \textbf{h})$, if 
\[\sum_{j=1}^m e^{-\frac{h_j^2}{2|e_j|}} + e^{-\frac{l_j^2}{2|e_j|}} < \frac{1}{2}.\]
In particular, Balancer has a winning strategy if $l_j,h_j > \sqrt{2 \ln (4m)|e_j|}$ for every $j$.
\end{theorem}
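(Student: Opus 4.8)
The plan is to use a potential function (weight function) argument in the spirit of Erd\H{o}s--Selfridge and of the discrepancy-games paper, where Balancer plays the role of ``Breaker'' against two dangers per edge: the danger of $|B\cap e_j|-|U\cap e_j|$ exceeding $h_j$ (too many Balancer elements), and the danger of it dropping below $-l_j$ (too many Unbalancer elements). For each edge $e_j$ I would introduce two exponential weights, one for each danger. Concretely, after a round in which $t_j$ of the vertices of $e_j$ have been claimed, with current discrepancy $s_j = |B\cap e_j| - |U\cap e_j|$, set the ``high'' potential of $e_j$ to something like $w_j^{h} = \lambda_j^{h_j - s_j}(1+\lambda_j)^{-(|e_j| - t_j)}$ for a suitable base $\lambda_j \in (0,1)$ (and symmetrically $w_j^{l} = \mu_j^{l_j + s_j}(1+\mu_j)^{-(|e_j|-t_j)}$ for the ``low'' danger), and let the total potential be $\Phi = \sum_j (w_j^h + w_j^l)$. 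The bases $\lambda_j,\mu_j$ should be chosen so that at the end of the game a ``lost'' edge (one with $s_j > h_j$ or $s_j < -l_j$) contributes potential at least $1$, which forces the tuning $\lambda_j \approx h_j/|e_j|$ and $\mu_j \approx l_j/|e_j|$; optimizing the resulting bound is exactly what produces the exponents $e^{-h_j^2/(2|e_j|)}$ and $e^{-l_j^2/(2|e_j|)}$, and the threshold $\sqrt{2\ln(4m)|e_j|}$ then comes from requiring each of the $2m$ terms to be below $1/(4m)$.

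Next I would verify the key invariant: Balancer can play so that $\Phi$ never increases beyond its initial value. In each round Unbalancer offers two free vertices $u,v$. Consider how $\Phi$ changes. A vertex $x$ lies in some set of edges; assigning $x$ to Balancer ($f(x)=+1$) multiplies, for each edge $e_j\ni x$, the factor $w_j^h$ by $\lambda_j/(1+\lambda_j) \cdot (1+\lambda_j) = $ — more carefully, one unclaimed slot becomes claimed (so the $(1+\lambda_j)^{-(|e_j|-t_j)}$ factor gains a $(1+\lambda_j)$) and $s_j$ goes up by one (so $\lambda_j^{h_j-s_j}$ loses a $\lambda_j$); net multiplier $(1+\lambda_j)\lambda_j^{-1}\cdot\lambda_j = \ldots$ I will set up the bookkeeping so that for each edge the two choices (give $x$ to Balancer vs. to Unbalancer) have potential-contributions whose \emph{average} equals the ``do nothing but decrement the unclaimed count'' value — this is the standard martingale-type identity behind Erd\H{o}s--Selfridge, and it holds precisely because $(1+\lambda_j)^{-1}(\lambda_j^{-1} + \lambda_j) \le \ldots$; actually the clean statement is that for each offered vertex $x$, $\frac{1}{2}(\Phi_{\text{after }x\to B} + \Phi_{\text{after }x\to U})$ does not exceed the pre-round value contributed through $x$'s edges, after accounting for the mandatory decrement of unclaimed slots. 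Given two offered vertices $u,v$, Balancer picks whichever of the two assignments (take $u$, or take $v$) yields the smaller $\Phi$; since that minimum is at most the average over the two symmetric outcomes, and the averaging identity controls each, $\Phi$ does not grow. Summing the per-edge inequalities is where one must be a little careful about a vertex lying in many edges, but because the update is multiplicative per edge and we only ever compare ``Balancer takes $u$'' against ``Balancer takes $v$'' (never against a third option), the bound goes through cleanly.

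Then I would compute the initial value: before any move, every edge has $t_j = 0$ and $s_j = 0$, so $w_j^h = \lambda_j^{h_j}(1+\lambda_j)^{-|e_j|}$ and $w_j^l = \mu_j^{l_j}(1+\mu_j)^{-|e_j|}$. Choosing $\lambda_j = h_j/|e_j|$ and using $(1+\lambda_j)^{-|e_j|} \le e^{-|e_j|\lambda_j + |e_j|\lambda_j^2/2}$-type estimates (or more simply $(1+x)^{-1} \le e^{-x + x^2/2}$), the product $\lambda_j^{h_j}(1+\lambda_j)^{-|e_j|}$ is at most $e^{-h_j^2/(2|e_j|)}$ after the standard optimization; symmetrically for $w_j^l$. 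Hence $\Phi_{\text{initial}} \le \sum_j \bigl(e^{-h_j^2/(2|e_j|)} + e^{-l_j^2/(2|e_j|)}\bigr) < \tfrac12$ by hypothesis. Finally, suppose for contradiction that Unbalancer wins, i.e.\ some $e_j$ ends with $s_j \ge h_j + 1$ or $s_j \le -l_j - 1$; in either case, at the end of the game $t_j = |e_j|$ (every vertex of $e_j$ is claimed, since the game only leaves at most one unclaimed vertex overall — here a small caveat: if the single unclaimed vertex lies in $e_j$ one argues the discrepancy still can't be pushed past the threshold, or one absorbs this into the constants), so the corresponding weight is at least $\lambda_j^{-1}\ge 1$ (resp.\ $\mu_j^{-1}\ge 1$), giving $\Phi_{\text{final}} \ge 1 > \tfrac12 \ge \Phi_{\text{initial}}$, contradicting monotonicity. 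The ``in particular'' clause follows since $l_j,h_j > \sqrt{2\ln(4m)|e_j|}$ makes each of the $2m$ exponential terms smaller than $1/(4m)$.

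The main obstacle I anticipate is pinning down the exact form of the per-edge potential and the base $\lambda_j$ so that (i) the averaging/monotonicity inequality holds exactly in the Waiter-Client (two-offered-elements) setting rather than the Maker-Breaker setting, and (ii) the end-of-game ``lost edge has weight $\ge 1$'' bound is clean despite the lone possibly-unclaimed vertex and despite the integrality of $h_j,l_j$. The convexity inequality $\tfrac12(a\lambda + a\lambda^{-1}) \ge a$ combined with the slot-decrement factor is the crux; getting the direction of all inequalities consistent (Balancer minimizes, so we need the min $\le$ average) and making sure the per-edge contributions sum correctly when vertices have high edge-degree is the delicate part, but it is structurally identical to the discrepancy-games argument of \cite{Discrepancy Games}, just transposed to the Picker/Chooser offering rule.
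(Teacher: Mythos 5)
Your high-level plan---a per-edge two-sided potential, Balancer picks the option minimizing $\Phi$, and ``min $\leq$ average'' together with a supermartingale property keeps $\Phi$ below $1$---is the same skeleton the paper uses, so the architecture is right. But the specific exponential potential you wrote, $w_j^h = \lambda_j^{h_j-s_j}(1+\lambda_j)^{-(|e_j|-t_j)}$, does not satisfy the required averaging inequality, and this is not a bookkeeping detail you can defer. Consider a round in which exactly one of the two offered elements, say $x$, lies in $e_j$. Giving $x$ to Balancer multiplies $w_j^h$ by $\lambda_j^{-1}(1+\lambda_j)$; giving it to Unbalancer multiplies $w_j^h$ by $\lambda_j(1+\lambda_j)$. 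The average multiplier is $\frac{(1+\lambda_j)(\lambda_j^{-1}+\lambda_j)}{2}$, and since $\lambda_j^{-1}+\lambda_j \geq 2$ this is at least $1+\lambda_j > 1$ for every $\lambda_j \in (0,1)$. So the average of the two outcomes strictly \emph{increases} $w_j^h$, and ``min $\leq$ average'' gives you nothing: the potential can still grow. The correct slot-decrement base is not $(1+\lambda_j)$ but $\tfrac{1+\lambda_j^2}{2\lambda_j}$ (equivalently, $\cosh(\ln \lambda_j^{-1})$), which makes the one-in-$e_j$ case an exact martingale and makes the two-in-$e_j$ case (both offered elements in the same $e_j$, which genuinely happens here because Unbalancer offers both elements) strictly decreasing; with your base, the two-in-$e_j$ case also goes the wrong way. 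This second case is particular to the Waiter-Client offering rule and has no analogue in the Maker-Breaker discrepancy game you are transposing from, so it is precisely the spot where ``structurally identical'' fails.

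The paper sidesteps all of this tuning by taking the potential of $e_j$ to be $P_{k_j,\,h_j-d_j}$, the probability that a symmetric $\pm1$ random walk of length $k_j$ (the number of still-free vertices of $e_j$) ever exceeds $h_j-d_j$. The exact recurrence $P_{k,M}=\tfrac12 P_{k-1,M-1}+\tfrac12 P_{k-1,M+1}$ for $M\geq 0$ gives the one-in-$e_j$ martingale identity with no $\lambda$ to optimize; the trivial monotonicity $P_{k-2,M}\leq P_{k,M}$ handles the two-in-$e_j$ case; $P_{k,M}=1$ for $M<0$ gives the ``lost edge has weight $\geq 1$'' endgame cleanly (no caveat about a lone unclaimed vertex); and the reflection-principle bound $P_{k,M}\leq 2e^{-M^2/2k}$ produces the stated condition, with the factor $2$ exactly accounting for the $\tfrac12$ on the right-hand side. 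Your approach can likely be repaired with the corrected base, but as written it has a real gap in step (ii), the supermartingale inequality.
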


To prove Theorem \ref{general_UB}, we will use a potential (or a weight) function. Our potential function will use the expressions $P_{k,M}$, which we define as follows:
Let $X_1, X_2, \dots, X_k$ be $k$ i.i.d. random variables, uniform on $\{-1, 1\}$. Let $S_j = \sum_{i=1}^j X_i$ for $1 \leq j \leq k$ be the random walk generated by $X_i$. Let also $S_0 = 0$. We define $P_{k,M} = \Pr (\max_{j\geq 0} S_j > M)$, the probability that at some point during the random walk its value was more than $M$. Note that $P_{k,M} = 1$ for $M < 0$, and also note the equality $P_{k, M} = \frac{1}{2}P_{k - 1, M - 1} + \frac{1}{2}P_{k - 1, M + 1}$ when $M \geq 0$, which is trivial by the definition.

Suppose Unbalancer and Balancer are playing $UB(H, \textbf{l}, \textbf{h})$, labelling the claimed vertices by $-1$ and $1$, respectively. At any point in the game, for $e_j \in E$, we denote $k_j = |\{v \in e_j | f(v) = 0\}|$ and $d_j = \sum_{v \in e_j} f(v)$. We define the \textit{positive weight} of $e_j$, denoted by $W_j^+$, to be $W_j^+ = P_{k_j, h_j - d_j}$. Similarly we define the \textit{negative weight} of $e_j$ to be $W_j^- = P_{k_j, d_j + l_j}$. The \textit{total weight} of the game is $W = \sum_j (W_j^+ + W_j^-)$.

\begin{lemma}
If $\sum_{j}P_{|e_j|, h_j} + \sum_{j}P_{|e_j|, l_j} < 1$, Balancer has a strategy to win.
\end{lemma}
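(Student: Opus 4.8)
The plan is to run an Erd\H{o}s--Selfridge style potential argument with the total weight $W = \sum_j (W_j^+ + W_j^-)$ as the potential. \textbf{Balancer's strategy} is simply greedy: whenever Unbalancer offers a pair $\{v_1, v_2\}$ of unclaimed vertices, Balancer claims whichever of the two makes the new total weight smaller, breaking ties arbitrarily. The bookkeeping at the start is immediate: before any move each edge $e_j$ has $k_j = |e_j|$ unclaimed vertices and imbalance $d_j = 0$, so $W_j^+ = P_{|e_j|, h_j}$ and $W_j^- = P_{|e_j|, l_j}$, whence the initial total weight is exactly $\sum_j P_{|e_j|, h_j} + \sum_j P_{|e_j|, l_j} < 1$ by hypothesis.

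The heart of the proof is to show that Balancer's greedy move never increases $W$. Fix a round, let $\{v_1, v_2\}$ be the offered pair, and let $W^{(1)}, W^{(2)}$ be the total weights resulting from Balancer taking $v_1$, respectively $v_2$; it suffices to prove $W^{(1)} + W^{(2)} \le 2W$, since the smaller of the two is then at most $W$. I would establish this by comparing contributions edge by edge. An edge containing neither $v_1$ nor $v_2$ is untouched and contributes $2(W_j^+ + W_j^-)$ to each side. For an edge $e_j$ containing exactly one of them, say $v_1$ (the other subcase is symmetric), option $(i)$ turns $(k_j, d_j)$ into $(k_j - 1,\, d_j + (-1)^{i+1})$, so the positive-weight contributions of $e_j$ to $W^{(1)}$ and $W^{(2)}$ sum to $P_{k_j-1,\, h_j - d_j - 1} + P_{k_j-1,\, h_j - d_j + 1}$; when $h_j - d_j \ge 0$ this equals $2P_{k_j, h_j-d_j} = 2W_j^+$ by the recursion $P_{k,M} = \tfrac12 P_{k-1,M-1} + \tfrac12 P_{k-1,M+1}$, and when $h_j - d_j < 0$ it is at most $2 = 2 W_j^+$ because $P_{k,M} = 1$ for $M < 0$ and $P_{k,M} \le 1$ always. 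The negative weights behave identically, so $e_j$ contributes at most $2(W_j^+ + W_j^-)$ here too. If $e_j$ contains both $v_1$ and $v_2$, then in either option $d_j$ is unchanged while $k_j$ decreases by $2$, so the contribution of $e_j$ is $2\bigl(P_{k_j - 2,\, h_j - d_j} + P_{k_j - 2,\, d_j + l_j}\bigr) \le 2(W_j^+ + W_j^-)$, using that $P_{k,M}$ is non-decreasing in $k$ (the running maximum of a walk dominates that of its length-$(k-2)$ prefix). Summing over $j$ gives $W^{(1)} + W^{(2)} \le 2W$, as desired.

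It remains to close the loop. Since the greedy strategy keeps $W$ non-increasing over all $\lfloor |V|/2 \rfloor$ rounds, the final total weight stays strictly below $1$. If Balancer lost, some edge $e_j$ would have $d_j > h_j$ or $d_j < -l_j$ at the end, i.e.\ $h_j - d_j < 0$ or $d_j + l_j < 0$; but then the corresponding weight of $e_j$ equals $P_{k_j, M}$ with $M < 0$, which is $1$, forcing $W \ge 1$, a contradiction. Hence Balancer wins.

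The main obstacle I expect is the per-edge inequality in the ``exactly one endpoint'' case: one must carefully restrict the recursion $P_{k,M} = \tfrac12 P_{k-1,M-1} + \tfrac12 P_{k-1,M+1}$ to $M \ge 0$ and absorb the $M < 0$ range cleanly through $P_{k,M} = 1$. A secondary point of care is the ``both endpoints'' case, which relies on monotonicity of $P_{k,M}$ in $k$, together with the mildly degenerate final round when $|V|$ is odd --- there the leftover vertex simply remains labelled $0$, which only keeps some $k_j$'s larger and so does no harm to the argument.
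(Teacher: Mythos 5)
Your proposal is correct and takes essentially the same approach as the paper: the greedy strategy that minimizes the updated total weight, the per-edge inequality $W^{(1)} + W^{(2)} \le 2W$ established via the recursion $P_{k,M} = \tfrac12 P_{k-1,M-1} + \tfrac12 P_{k-1,M+1}$ and the monotonicity of $P_{k,M}$ in $k$, and the endgame contradiction $W \ge 1$ if some edge is unbalanced. The only slight stylistic difference is that you absorb the $M<0$ regime directly via $P_{k,M}=1$ and $P_{k,M}\le 1$, whereas the paper rules it out up front by noting $W_j^+ < 1$ follows from the running invariant $W<1$; both are sound.
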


\begin{proof}
Note that the condition $\sum_{j}P_{|e_j|, h_j} + \sum_{j}P_{|e_j|, l_j} < 1$ is, by definition, the requirement that $W < 1$ at the start of the game. Balancer's strategy is very simple to describe: when Unbalancer offers elements $x,y$, Balancer chooses the element which will minimize the updated weight $W$. That is, Balancer calculates the new weight $W_x$, achieved by assigning $f(x) = 1, f(y) = -1$, and similarly the new weight $W_y$, achieved by assigning $f(x) = -1$, $f(y) = 1$. If $W_x < W_y$ Balancer chooses $x$. Otherwise, he chooses $y$.

In fact we will now prove that, following this strategy, Balancer guarantees that if $W < 1$ before some round, then $W$ does not increase after that round, and in particular $W < 1$.
$W$ is the sum of $W_j^+$ and $W_j^-$ for all $j$. Fix $j$ and look at what happens to $W_j^+$ when Balancer chooses $x$ or $y$ (denoted $W_{j,x}^+$ and $W_{j,y}^+$ respectively). We begin by claiming that $W_j^+ \geq \frac{1}{2}(W_{j,x}^+ + W_y^+)$. To prove it, note that $W_j^+ < 1$ because $W < 1$, and consider the following four cases:
\\ In the first case, $x,y \notin e_j$. In this case clearly $k_j, d_j$ do not change and $W_j^+ = W_{j,x}^+ = W_{j,y}^+$ regardless of what Balancer chooses, and in particular $W_j^+ = \frac{1}{2}(W_{j,x}^+ + W_{j,y}^+)$.
\\ In the second case, $x,y \in e_j$. In this case, $k_j$ decreases by $2$ and $d_j$ does not change regardless of what Balancer does. Since clearly $P_{k-2, M} \leq P_{k, M}$ for all $k \geq 2$ and $M$, we have that $W_j^+ \geq W_{j,x}^+ = W_{j,y}^+$, and in particular $W_j^+ \geq \frac{1}{2}(W_{j,x}^+ + W_{j,y}^+)$ holds.
\\ In the third case, $x \in e_j$ and $y \notin e_j$. In this case, $k_j$ decreases by $1$ regardless of what Balancer does. If Balancer picks $x$, $d_j$ increases by $1$. Otherwise, it decreases by $1$. This means that $W_{j,x}^+ = P_{k_j - 1, h_j - d_j - 1}$ and $W_{j,y}^+ = P_{k_j - 1, h_j - d_j + 1}$. As mentioned previously, $P_{k_j, h_j - d_j} = \frac{1}{2}P_{k_j - 1, h_j - d_j - 1} + \frac{1}{2}P_{k_j - 1, h_j - d_j + 1}$ when $h_j - d_j \geq 0$, which is true in our case because $W_j^+ < 1$. Hence we get $W_j^+ = \frac{1}{2}(W_{j,x}^+ +W_{j,y}^+)$.
\\ The fourth case, in which $x \notin e_j$ and $y \in e_j$, is analogous to the third case, and the same conclusion applies: $W_j^+ = \frac{1}{2}(W_{j,x}^+ + W_{j,y}^+)$.
\\This proves the claim that $W_j^+ \geq \frac{1}{2}(W_{j,x}^+ + W_y^+)$. Clearly, the same analysis holds when observing $W_j^-$ instead, yielding $W_j^- \geq \frac{1}{2}(W_{j,x}^- + W_{j,y}^-)$.

From the analysis for each $W_j^+$ and $W_j^-$ it follows that 
\[W = \sum_j(W_j^+ + W_j^-) \geq \frac{1}{2}\sum_j(W_{j,x}^+ + W_{j,y}^+ + W_{j,x}^- + W_{j,y}^-) = \frac{1}{2}(W_x + W_y).\]
In particular, either $W_x$ or $W_y$ must be at most as large as $W$, hence Balancer's strategy guarantees the new weight to be at most $W$, which is smaller than $1$.

When the game ends, clearly still $W < 1$. Now we need to show that for any edge $e_j$, it holds that $-l_j \leq d_j \leq h_j$. This is immediate from the fact that $W_j^+$ and $W_j^-$ are smaller than $1$. Indeed, if $d_j > h_j$, then we have $W_j^+ = P_{k_j, h_j - d_j} = 1$, a contradiction. Similarly if $d_j < -l_j$ then we have $W_j^- = P_{k_j, d_j + l_j} = 1$, a contradiction. Therefore Balancer won the game.
\end{proof}

Now proving Theorem \ref{general_UB} is quite easy --- one only needs to remember that $P_{k,M} \leq 2e^{\frac{-M^2}{2k}}$, and to use the previous theorem. The latter inequality is simply proven using the mirror principle and the Chernoff bound.

We can now use this general result to deduce Theorem \ref{max_deg_upper} for the Waiter-Client Maximum-Degree game:

\begin{proof}[Proof of Theorem \ref{max_deg_upper}] Client views himself as a Balancer in a Unbalancer-Balancer game on the hypergraph $H$ whose (hyper-)vertices are the edges of $K_n$. The (hyper-)edges in $H$ are $e_v = \{(v,u) \in E(K_n) | u \in V(K_n)\}$, all the edges of $K_n$ touching $v$. Thus $H$ has $n$ edges, all of size $n-1$. Due to Theorem \ref{general_UB}, Client has a strategy which guarantees that at the end of the Unbalancer-Balancer game, he has at most $(1+o(1))\sqrt{2n \ln n}$ more elements than Waiter in every hyper-edge $e_v$. It follows that for every $v$, Client could not have taken more than $\frac{n}{2} + (1+o(1))\sqrt{\frac{n \ln n}{2}}$ edges touching $v$. When the Unbalancer-Balancer game ends, the Waiter-Client game essentially ends as well (Waiter might claim one remaining edge of $K_n$). This shows that Client successfully avoided degree at least $\frac{n}{2} + (1+o(1))\sqrt{\frac{n \ln n}{2}}$ for every vertex $v$, and thus won the game.
\end{proof}

\subsection{Lower Bound}

In this subsection we will prove Theorem \ref{max_deg_lower} and thus give a lower bound for the maximum degree Waiter can force Client to have in the unbiased game. We start by describing the concept of ``Advantage". We then describe and analyze a certain combinatorial process, and finally utilize this process to give an explicit strategy for Waiter.
\subsubsection{Advantage}
Suppose Waiter and Client are playing on the edges of a graph $G = (V,E)$. Define the following notation: $E_v = \{e \in E | v \in e\}$, $E_i^W(v) = E_i^W \cap E_v$, and similarly $E_i^C(v) = E_i^C \cap E_v$. We define the \textit{Advantage} at a vertex $v$ (after round $i$) to be $|E_i^C(v)| - |E_i^W(v)|$, and denote it by $A_i(v)$. We will also use a more generalized concept of advantage, the advantage between $v$ and a set $U$ of vertices. We define $A_i^U(v) = |\{u \in U | (v,u) \in E_i^C\}| - |\{u \in U | (v,u) \in E_i^W\}|$, the advantage of $v$ in $U$. When we assume it is clear from the context, we may omit the subscript $i$.

The advantage of a vertex is a crucial number when thinking about the maximum degree game, because Waiter can keep the accumulated advantage at a vertex for the rest of the game. The formal statement is given by the following simple claim:

\begin{claim}\label{advantage_win}
Suppose that after $i$ rounds of an unbiased Waiter-Client game on $E(K_n)$ there is a vertex $v \in V$ such that $A_i(v) = a$. Then Waiter has a strategy to ensure that at the end of the game $d^C(v) \geq \frac{n}{2} + \frac{a}{2} - 1 $.
\end{claim}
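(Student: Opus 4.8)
The plan is to hand Waiter a strategy that ignores every edge of $K_n$ not incident to $v$ and simply pairs up the free edges at $v$ two at a time. The whole argument rests on one bookkeeping identity: writing $c = |E_i^C(v)|$, $w = |E_i^W(v)|$ and $f = |E_i^F(v)|$ for the Client-, Waiter-, and free-degree of $v$ after round $i$, we have $c + w + f = n-1$ and $c - w = a$, hence $c = \tfrac{n-1-f+a}{2}$.

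From round $i+1$ onward, Waiter's strategy is: as long as at least two free edges incident to $v$ remain, offer two of them. Whichever edge Client claims is incident to $v$, so $d^C(v)$ increases by $1$, and Waiter claims the other, so $f$ decreases by $2$ in such a round. After $\lfloor f/2\rfloor$ rounds of this type at most one free edge at $v$ is left, and Client has been forced to take $\lfloor f/2\rfloor$ edges at $v$, so at that moment $d^C(v) \ge c + \lfloor f/2\rfloor$; this can only grow for the rest of the game. Substituting the identity for $c$ and using $\lfloor f/2\rfloor \ge \tfrac{f-1}{2}$ gives
\[
d^C(v) \;\ge\; \frac{n-1-f+a}{2} + \frac{f-1}{2} \;=\; \frac{n}{2} + \frac{a}{2} - 1,
\]
which is the claimed bound.

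It remains to dispose of the single leftover free edge at $v$ in the case $f$ odd. In the worst case Waiter never manages to give it to Client: either it eventually becomes the unique free edge of the whole board, and the rules force Waiter to claim it, or whenever Waiter offers it alongside another free edge Client declines it; in every such scenario Client still has at least $\lfloor f/2\rfloor$ edges at $v$, so the displayed inequality stands. When $f$ is even the same computation yields the slightly stronger $\tfrac{n}{2}+\tfrac{a}{2}-\tfrac12$.

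The statement is elementary, and I do not expect a real obstacle; the only point needing care is this parity/endgame accounting of the one possibly-lost edge at $v$, together with the trivial check that Waiter can always make a legal move — offering two not-yet-claimed edges at $v$, which is precisely what ``free edge at $v$'' means — so the $-1$ slack in the bound is exactly what absorbs both the odd-$f$ loss and the global last-edge rule.
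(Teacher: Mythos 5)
Your proof is correct and is essentially the same as the paper's: Waiter repeatedly offers pairs of free edges at $v$, which forces Client's degree at $v$ up to $c + \lfloor f/2\rfloor$ while Waiter's degree matches, and the $-1$ slack absorbs the parity loss. The paper phrases the same computation via the invariance of the advantage ($d^C-d^W\ge a$) together with $d^C+d^W\ge n-2$ after the pairing rounds, rather than solving for $c$ explicitly, but the strategy and the arithmetic are identical.
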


\begin{proof}
Let $E_i^F = \{(v_1, v), (v_2,v), \dots, (v_k,v)\}$ be the free edges touching $v$ after round $i$. For the next $m = \left \lfloor \frac{k}{2} \right \rfloor$ rounds, Waiter offers $(v_{2j-1},v)$ and $(v_{2j}, v)$ during round $i + j$. Clearly the advantage does not change during these rounds, as both Waiter and Client each get one edge touching $v$ every round. Therefore, $A_{i+ m} = A_i(v) = a$, thus $d^C_{i+m}(v) - d^W_{i+m} \geq a$. Also, at least $n-2$ of the edges in $E_v$ have been offered, therefore $d^C_{i+m}(v) + d^W_{i+m} \geq n-2$. This yields $d^C_{i+m}(v) \geq \frac{n}{2} + \frac{a}{2} - 1 $. Since $d^C(v)$ cannot decrease during the game, we conclude the desired result.
\end{proof}
\subsubsection{Advantage Process}
The \textit{Advantage Process} is a simple deterministic process. It works as follows: suppose we have an infinite number of boxes labeled with the integers, and suppose we start with $b$ balls in box number $0$ (all the other boxes are empty). The process works in ``rounds". Let $a_{j,i}$ denote the number of balls in box $i$ after round $j$. Thus, $a_{0,i} = 0$ for all $i \neq 0$, and $a_{0,0} = b$. In round $j$, for every box $i$ we take half (rounded down) of its balls and move them to box $i+1$ (we say these balls ``move right"), and half (rounded down) of its balls and move them to box $i-1$ (we say these balls ``move left"). If box $i$ had an odd number of balls in it, then one of them remains inside box $i$. Formally: 
\begin{equation*}
a_{j,i} = \left \lfloor \frac{a_{j-1,i-1}}{2} \right \rfloor + \left \lfloor \frac{a_{j-1,i+1}}{2} \right \rfloor + (a_{j-1,i}\, mod  \ 2).
\end{equation*}
The Advantage Process gets two parameters; $b$, the number of balls, and $r$, the number of rounds. The quantities $a_{j,i} = a_{j,i}(b,r)$ can then be calculated, and we denote by $a_i^* = a_{r,i}$ the number of balls in box $i$ after $r$ rounds, when starting the process with $b$ balls in box $0$. In the following text, we will often use $a_i^*$ without mentioning $b$ and $r$, when they are clear from the context.

We next analyze the Advantage Process, aiming to prove the following Lemma:

\begin{restatable}{lemma}{manyfar}\label{many_far}
Suppose $b \geq r^{\frac{2}{3}}$ and $r$ is large enough. Then after running the Advantage Process with parameters $b,r$, at least $0.03 b$ balls are at a box at least as large as $0.1\sqrt{r}$.
\end{restatable}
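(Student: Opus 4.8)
The plan is to compare the Advantage Process to a simple random walk. If instead of deterministically splitting each box's balls we imagine each ball performing an independent lazy random walk (move left with probability $1/4$, right with probability $1/4$, stay with probability $1/2$ — or, for simplicity, a standard $\pm 1$ walk), then after $r$ rounds a single ball sits at position $S_r$, and by the central limit theorem $\Pr(|S_r| \geq 0.1\sqrt r)$ is bounded below by an absolute constant (comfortably more than $0.06$ for large $r$, say). Since the constant $0.03$ we need is half of that, the idea is that the deterministic rounding in the Advantage Process is a lower-order perturbation of this random-walk behavior, and a ``de-randomized'' version of the CLT estimate survives.

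The key steps, in order, would be: (i) Define the ``ideal'' (real-valued, no rounding) process $\tilde a_{j,i}$ by $\tilde a_{j,i} = \frac12 \tilde a_{j-1,i-1} + \frac12 \tilde a_{j-1,i+1}$, so that $\tilde a_{r,i}/b$ is exactly the distribution of a $\pm 1$ random walk after $r$ steps (parity issues aside — one can restrict attention to boxes of the correct parity, or pass to the lazy walk). (ii) Estimate, via the CLT / local limit theorem / reflection principle, that $\sum_{i \geq 0.1\sqrt r} \tilde a_{r,i} \geq c_0 b$ for an explicit $c_0$ strictly bigger than $0.03$ (by symmetry it suffices to look at one tail; $\Pr(S_r \geq 0.1\sqrt r)$ tends to $1-\Phi(0.1) \approx 0.46$, so there is enormous room). (iii) Bound the total discrepancy between the two processes: each rounding step $\lfloor a/2 \rfloor$ vs $a/2$ loses at most $1/2$ per box, and at most (number of nonempty boxes) $\leq r+1$ boxes are active per round over $r$ rounds, so the accumulated $\ell_1$-discrepancy between $(a_{r,i})_i$ and $(\tilde a_{r,i})_i$ is $O(r^2)$. (iv) Since $b \geq r^{2/3}$ forces... wait — $r^2$ is not $o(b)$ when $b \approx r^{2/3}$, so the naive discrepancy bound is far too weak. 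This is the crux.

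The main obstacle, then, is exactly step (iv): the crude $O(r^2)$ error bound dwarfs $b$ in the regime $b \approx r^{2/3}$ that the lemma insists on. To fix this I would argue that the ``lost'' balls (the ones stranded by odd-parity rounding) are not thrown away — they stay in their box and continue to diffuse in later rounds, so they are not lost at all, merely delayed. The right move is probably not to compare with the unrounded process globally, but to track a cleverly chosen sub-population: e.g. show by an explicit induction that for every $j$, once a box at distance $t$ from the origin becomes nonempty it retains $\geq c \cdot a_{j',\cdot}$-type mass, or to set up a potential/monovariant such as $\sum_i a_{j,i}\cdot g(i)$ for a convex $g$ (like $g(i)=i^2$, which grows by exactly the number of ``active splits'' each round, giving $\sum_i a_{r,i} i^2 \approx br - (\text{small correction})$) and then combine a second-moment lower bound with a fourth-moment or maximum-spread upper bound (no ball can be past box $\pm r$) to force a constant fraction of the mass past $0.1\sqrt r$ via Paley–Zygmund. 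The condition $b \geq r^{2/3}$ and ``$r$ large'' should enter precisely to guarantee that the integer rounding corrections in the moment computations are lower order than the main terms (each box contributes a bounded rounding error to $\sum a_{j,i} i^2$, there are $O(r)$ boxes and $O(r)$ rounds, giving error $O(r^2)$ in a quantity of size $\Theta(br) = \Omega(r^{5/3})$ — still not obviously negligible, so the moment argument likewise needs the refinement that odd leftovers are retained rather than destroyed, keeping $\sum_i a_{j,i} = b$ exactly). I would therefore aim for the Paley–Zygmund route: prove $\sum_i a_{r,i} = b$ exactly, $\sum_i a_{r,i} i^2 \geq (1-o(1)) br$, and $\sum_i a_{r,i} i^4 \leq O(br^2)$ (or just $|i| \leq r$ always), and conclude that at least a constant fraction of the $b$ balls lie beyond $0.1\sqrt r$, with the constant worked out to beat $0.03$.
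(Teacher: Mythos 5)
Your proposal circles the right ideas but leaves out the one tool that makes them quantitative in the thin regime $b \geq r^{2/3}$: the martingale structure of the process and Azuma's inequality. You correctly diagnose that a naive comparison with the unrounded random walk has an accumulated discrepancy on the order of $r^2$, which swamps $b$, and you then pivot to the second-moment identity $\sum_i a_{j,i}\, i^2$ increasing by the number of balls that actually split each round. But you flag the same difficulty there yourself (``giving error $O(r^2)$ in a quantity of size $\Theta(br) = \Omega(r^{5/3})$ --- still not obviously negligible'') and do not resolve it. The resolution used in the paper is to view each ball's trajectory $X_k^0, X_k^1, \dots$ as a martingale with increments bounded by $1$, apply Azuma to get $\Pr(X_k \geq t\sqrt r) \leq e^{-t^2/2}$, and then deduce that only $O(\sqrt{r\ln b})$ boxes are ever nonempty. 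That, not $O(r)$, is the number of balls that can be stranded each round, so the per-round loss is $O(\sqrt{r\ln b}) = o(b)$ when $b \geq r^{2/3}$, and $\sum_i a_{r,i}\, i^2 = (1-o(1))br$ follows. Without the tail bound you cannot close this step.

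The second gap is in the anti-concentration step. Your Paley--Zygmund plan needs an upper bound on $\mathbb{E} X_1^4$, and you offer either a fourth-moment estimate $O(br^2)$ (not justified) or the crude spread bound $|i| \leq r$. The crude bound gives only $\mathbb{E} X_1^4 \leq r^2\,\mathbb{E} X_1^2 = O(r^3)$, so Paley--Zygmund yields $\Pr(X_1^2 > 0.01 r) \gtrsim r^2/r^3 = 1/r \to 0$, which is useless. The fourth-moment estimate $\mathbb{E} X_1^4 = O(r^2)$ is true, but to prove it you again need the exponential tail $\Pr(X_1 \geq t\sqrt r) \leq e^{-t^2/2}$, i.e.\ the same Azuma step. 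Once you have that tail, both your route (Paley--Zygmund on $Y=X_1^2$ with $\mathbb{E} Y \approx r$ and $\mathbb{E} Y^2 \leq 16 r^2$, giving roughly $(0.99)^2/32 \approx 0.0306$, just above $0.03$) and the paper's route (splitting $\mathbb{E} X_1^2$ into the ranges $[0,0.1\sqrt r]$, $(0.1\sqrt r, \sqrt{5r})$, $[\sqrt{5r},\infty)$ and bounding each piece, the last one via the tail) would close. So the overall architecture is sound and your endgame is a legitimate alternative to the paper's, but the proposal as written is missing the martingale/Azuma observation, which is the load-bearing step in both halves of the argument.
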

This lemma will have a key role in devising Waiter's strategy.

Note that the Advantage Process is a deterministic process. However, for the next couple of claims, we will adapt the following ``probabilistic" view of the process: Label the balls $1$ to $b$, and  during every round, for every box the balls that ``move right" and the balls that ``move left" are chosen randomly in every box (such that half of them (rounded down) move left and the same amount move right). We can then consider the random variable $X_k$, $1 \leq k \leq b$, to be the box in which ball $k$ is at the end of the process. We also denote $X_k^j$ to be the box in which ball $k$ is after round $j$. Note that since the amounts of balls in each box follow the above described deterministic process, $|\{k | X_k = i\}| = a_i^*$. By symmetry, $X_k$ all have the same distribution, and therefore for all $i,k$ it holds that $\Pr (X_k = i) = a_i^* / b$.

We now prove some properties of the Advantage Process:

\begin{claim}\label{symmetry}
The Advantage Process is symmetric, that is, $a_{j,i} = a_{j,-i}$ for all $i$.
\end{claim}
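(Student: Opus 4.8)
The plan is to prove this by a straightforward induction on the round number $j$, exploiting the fact that the update rule for the Advantage Process treats the ``left'' and ``right'' neighbors of a box symmetrically. There is no real obstacle here; the only thing to be careful about is correctly tracking how the floor terms and the parity correction term transform under the reflection $i \mapsto -i$.

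First I would establish the base case $j = 0$: by definition $a_{0,0} = b$ and $a_{0,i} = 0$ for every $i \neq 0$, and since the map $i \mapsto -i$ fixes $0$ and permutes the nonzero integers, we immediately get $a_{0,i} = a_{0,-i}$ for all $i$.

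Next I would carry out the inductive step. Assume $a_{j-1,i} = a_{j-1,-i}$ for all $i$. Applying the defining recursion
\[
a_{j,-i} = \left\lfloor \frac{a_{j-1,-i-1}}{2}\right\rfloor + \left\lfloor \frac{a_{j-1,-i+1}}{2}\right\rfloor + (a_{j-1,-i}\bmod 2),
\]
I would substitute $a_{j-1,-i-1} = a_{j-1,i+1}$, $a_{j-1,-i+1} = a_{j-1,i-1}$, and $a_{j-1,-i} = a_{j-1,i}$, all valid by the induction hypothesis. After this substitution the right-hand side becomes exactly the defining expression for $a_{j,i}$ (the two floor terms just swap roles, and the parity term is unchanged), so $a_{j,i} = a_{j,-i}$. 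This closes the induction and proves the claim for every $j$ and every $i$. I would also remark that the same argument shows, in the probabilistic view, that $X_k^j$ and $-X_k^j$ are identically distributed, which is the interpretation used later.
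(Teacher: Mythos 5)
Your proof is correct and follows essentially the same route as the paper: induction on $j$, with the base case handled by noting $a_{0,i}=0$ for $i\neq 0$, and the inductive step obtained by applying the recursion and substituting the inductive hypothesis so that the two floor terms swap and the parity term is preserved. The paper writes the chain of equalities starting from $a_{j,i}$ rather than $a_{j,-i}$, but this is a purely cosmetic difference.
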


\begin{proof}
We prove this by induction on $j$. By definition, for $j = 0$, $a_{j,i} = 0$ for all $i \neq 0$, so symmetry holds. Suppose it is true for $j-1$ (and all $i$). For any $i$, we have 
\[ a_{j,i} = \left \lfloor \frac{a_{j-1,i-1}}{2} \right \rfloor + \left \lfloor \frac{a_{j-1,i+1}}{2} \right \rfloor + (a_{j-1,i}\, mod  \ 2)  = \left \lfloor \frac{a_{j-1,-i+1}}{2} \right \rfloor + \left \lfloor \frac{a_{j-1,-i-1}}{2} \right \rfloor + (a_{j-1,-i}\, mod  \ 2) = a_{j,-i},\] where the second equality is due to the inductive assumption.
\end{proof}

Note that Claim \ref{symmetry} implies that $X_k$ has a symmetric distribution (around 0).

\begin{proposition}\label{tail_bound}
$\Pr \left(X_k \geq t\sqrt{r}\right) \leq e^{\frac{-t^2}{2}}$ for all $t > 0$.
\end{proposition}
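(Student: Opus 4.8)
The plan is to recognise $X_k^0, X_k^1, \dots, X_k^r$ as a martingale with increments of absolute value at most $1$, and then invoke Azuma's inequality. First I would fix the ball index $k$ and adopt the probabilistic view of the Advantage Process introduced above, letting $\mathcal{F}_j$ be the $\sigma$-algebra generated by the full random configuration of balls after round $j$; in particular $\mathcal{F}_j$ records in which box every ball sits, so $X_k^j$ is $\mathcal{F}_j$-measurable. Since every ball starts in box $0$ we have $X_k^0 = 0$, and since in a single round a ball moves by at most one box, $|X_k^j - X_k^{j-1}| \le 1$ for all $j$.

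The core of the argument is the martingale identity: conditioned on $\mathcal{F}_{j-1}$, the increment $X_k^j - X_k^{j-1}$ has mean $0$. Indeed, condition on $\mathcal{F}_{j-1}$ and suppose ball $k$ lies in a box that currently holds $a$ balls. In round $j$ exactly $\lfloor a/2 \rfloor$ of these balls are chosen to move right, $\lfloor a/2 \rfloor$ to move left, and $a \bmod 2$ to stay put, the choice being uniform among the balls in that box; hence, conditionally, ball $k$ moves right with probability $\lfloor a/2 \rfloor / a$, left with the same probability, and stays with probability $(a \bmod 2)/a$. This conditional law is symmetric about $0$, so its conditional mean vanishes, which is exactly the martingale property. (Equivalently: conditioned on the past, ball $k$ takes one step of a lazy $\pm 1$ random walk.)

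Having established that $0 = X_k^0, \dots, X_k^r$ is a martingale with unit-bounded increments, Azuma's inequality (the theorem quoted in Section~\ref{preliminaries_section}) gives $\Pr(X_k^r > t\sqrt{r}) < e^{-t^2/2}$ for every $t > 0$. Since $X_k = X_k^r$, this already yields the claim: for any $t' < t$ we have $\{X_k \ge t\sqrt{r}\} \subseteq \{X_k > t'\sqrt{r}\}$, so $\Pr(X_k \ge t\sqrt{r}) \le e^{-t'^2/2}$, and letting $t' \uparrow t$ gives $\Pr(X_k \ge t\sqrt{r}) \le e^{-t^2/2}$.

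I expect the only genuine subtlety to be the careful justification of the martingale step --- namely, arguing that conditioning on the \emph{entire} configuration $\mathcal{F}_{j-1}$ still leaves the move of the particular ball $k$ symmetric, rather than merely leaving the total numbers of balls moving each way balanced. Once one fixes the interpretation that within each box a uniformly random half of the balls moves right and a uniformly random half moves left, this is immediate. Everything else --- the increment bound and the passage from $>$ to $\ge$ --- is routine, and the whole proof is short.
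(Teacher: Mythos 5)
Your proof takes the same route as the paper's: view $(X_k^j)_{j\ge 0}$ as a martingale with increments bounded by $1$ (using the uniform-choice interpretation to see that, conditioned on the current box holding $a$ balls, ball $k$ moves $\pm 1$ each with probability $\lfloor a/2\rfloor/a$ and stays with probability $(a\bmod 2)/a$, a symmetric law), and then apply Azuma. Your version is if anything slightly more careful --- conditioning on the full $\sigma$-algebra $\mathcal{F}_{j-1}$ rather than only on $X_k^{j-1}$, and handling the $\ge$ vs.\ $>$ distinction explicitly --- but it is the same argument.
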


\begin{proof}
Note that $\mathbb{E}\left(X_k^j | X_k^{j-1}\right) = X_k^{j-1}$. Indeed, suppose $X_k^{j-1} = z$. Denoting $s = a_{j-1,z}$, we know the exact probabilities $\Pr \left(X_k^j = z + 1|X_k^{j-1} = z\right) = \Pr \left(X_k^j = z - 1|X_k^{j-1} = z\right) = \frac{\left \lfloor \frac{s}{2} \right \rfloor}{s}$. If $s$ is odd, there is also a positive probability that $X_k^j = z$. Anyway, it is clear that $\mathbb{E}\left(X_k^j | X_k^{j-1} = z\right) = z$. Hence the sequence $\left(X_k^j\right)_{j\geq 0}$ is a martingale which cannot change by more than $1$ (in absolute value). Using Azuma's inequality, we have $\Pr \left(X_k \geq t\sqrt{r}\right) = \Pr \left(X_k^r \geq t\sqrt{r}\right) \leq e^{-\frac{t^2}{2}}$.
\end{proof}

\begin{corollary}\label{not_far}
$a_i^* = 0$ for all $i$ such that $|i| > \sqrt{2r\ln b}$.
\end{corollary}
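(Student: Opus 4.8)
The plan is to read the corollary off directly from Proposition \ref{tail_bound}, using the probabilistic interpretation of the Advantage Process set up just before that proposition, namely that $\Pr(X_k = i) = a_i^*/b$ for every box $i$ and every ball $k$. Since each $a_i^*$ is a non-negative integer, it suffices to prove the strict inequality $a_i^* < 1$ for every $i$ with $|i| > \sqrt{2r\ln b}$, and this in turn is equivalent to showing $\Pr(X_k = i) < 1/b$.

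First I would treat $i > 0$. Writing $i = t\sqrt{r}$, i.e.\ $t = i/\sqrt{r}$, the assumption $i > \sqrt{2r\ln b}$ becomes $t > \sqrt{2\ln b}$, hence $t^2/2 > \ln b$. Now $\Pr(X_k = i) \leq \Pr(X_k \geq i) = \Pr\!\left(X_k \geq t\sqrt{r}\right)$, so Proposition \ref{tail_bound} yields $\Pr(X_k = i) \leq e^{-t^2/2} < e^{-\ln b} = 1/b$. Multiplying by $b$ gives $a_i^* = b\cdot\Pr(X_k = i) < 1$, and therefore $a_i^* = 0$.

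For $i < 0$ with $|i| > \sqrt{2r\ln b}$, I would just invoke the symmetry of the process (Claim \ref{symmetry}), which gives $a_i^* = a_{-i}^*$; since $-i > \sqrt{2r\ln b}$, the positive case already handled applies. (The value $i = 0$ is not covered by the hypothesis, since $0 \le \sqrt{2r\ln b}$.) There is essentially no obstacle: the whole argument is a one-line translation of the tail estimate of Proposition \ref{tail_bound} into a statement about the integer occupation numbers $a_i^*$, the only minor points being that the probabilistic view of the deterministic process is already justified in the text and that one uses $b \ge 1$ so that $1/b$ is a genuine upper threshold.
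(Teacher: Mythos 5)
Your proof is correct and is essentially the paper's own argument: both invoke Proposition \ref{tail_bound}, the identity $\Pr(X_k = i) = a_i^*/b$, the integrality of the $a_i^*$, and Claim \ref{symmetry} for negative $i$. The only cosmetic difference is that you bound each $a_i^*$ individually, whereas the paper bounds $\sum_{i \geq \lceil \sqrt{2r\ln b}\rceil} a_i^*$ all at once; your per-index version is if anything slightly cleaner since the strict inequality $i > \sqrt{2r\ln b}$ immediately gives the strict bound $a_i^* < 1$ without needing the ceiling.
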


\begin{proof}
Take $t$ such that $t\sqrt{r} = \lceil \sqrt{2r\ln b} \rceil$. Clearly $t \geq \sqrt{2\ln b}$. By the previous lemma and the fact that $\Pr \left(X_k = i\right) = \frac{a_i^*}{b}$, we have $\sum_{i=t\sqrt{r}}^\infty a_i^* < be^{-\frac{2\ln b}{2}} = 1$. Since $a_i^*$ are all non-negative integers, we conclude the corollary for positive $i$. By symmetry of the process, we conclude it for negative $i$.
\end{proof}

\begin{lemma}\label{potential}
Suppose $b \geq r^{\frac{2}{3}}$. Then $\sum_{k=1}^b (X_k)^2 = (1-o(1))br$. (Here, $o(1)$ means when $r \rightarrow \infty)$
\end{lemma}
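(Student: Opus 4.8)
The plan is to use the ``probabilistic'' view of the process together with the observation that $\sum_{k=1}^b (X_k)^2 = \sum_i i^2 a_i^*$ is a \emph{deterministic} quantity, hence equals its expectation $\sum_{k=1}^b \mathbb{E}[(X_k)^2]$. As established in the proof of Proposition \ref{tail_bound}, for each $k$ the sequence $\big(X_k^j\big)_{j\ge 0}$ is a martingale with $X_k^0=0$ and increments in $\{-1,0,1\}$. By orthogonality of martingale increments (conditioning the cross term $j<j'$ on the state after round $j'-1$ kills it),
\[ \mathbb{E}\big[(X_k^r)^2\big] \;=\; \sum_{j=1}^r \mathbb{E}\big[(X_k^j-X_k^{j-1})^2\big]. \]

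Next I would compute the per-round second moment exactly. Conditioned on $X_k^{j-1}=z$, ball $k$ lies in a box containing $s:=a_{j-1,z}$ balls, of which $\lfloor s/2\rfloor$ move right, $\lfloor s/2\rfloor$ move left and $s \bmod 2$ stay; hence $(X_k^j-X_k^{j-1})^2=1$ with probability $2\lfloor s/2\rfloor/s$ and is $0$ otherwise, so $\mathbb{E}\big[(X_k^j-X_k^{j-1})^2 \mid X_k^{j-1}=z\big] = 1-(a_{j-1,z}\bmod 2)/a_{j-1,z}$. Summing over $k$, using $|\{k : X_k^{j-1}=i\}|=a_{j-1,i}$ and $\sum_i a_{j-1,i}=b$, the round-$j$ contribution telescopes to $b-\#\{i:a_{j-1,i}\text{ odd}\}$. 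Summing over $j$ yields the clean identity
\[ \sum_{k=1}^b (X_k)^2 \;=\; br \;-\; \sum_{j=0}^{r-1}\#\{i:a_{j,i}\ \text{odd}\}. \]
In particular $\sum_k (X_k)^2\le br$ always, and it remains to show the ``defect'' sum on the right is $o(br)$.

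To bound the defect I would use $\#\{i:a_{j,i}\ \text{odd}\}\le \#\{i:a_{j,i}>0\}$, and note that the Azuma argument behind Corollary \ref{not_far}, applied to the $j$-step martingale $X_k^j$ with $j\le r$, gives $a_{j,i}=0$ once $|i|>\sqrt{2r\ln b}$; hence at most $2\sqrt{2r\ln b}+O(1)$ boxes are nonempty after any round $j\le r$. Therefore the defect is $O\big(r^{3/2}\sqrt{\ln b}\big)$, and dividing by $br$ leaves $O\big(\sqrt{r\ln b}\,/\,b\big)$. Since $b\ge r^{2/3}$ and $x\mapsto \ln x/x^2$ is eventually decreasing, for $r$ large we have $\ln b/b^2\le \ln(r^{2/3})/r^{4/3}$, so $\sqrt{r\ln b}/b = O\big(\sqrt{\ln r}\,/\,r^{1/6}\big)=o(1)$ \emph{uniformly} over all admissible $b$. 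This gives $\sum_{k=1}^b(X_k)^2=(1-o(1))br$.

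The step I expect to be the main obstacle is precisely this last uniformity point: because the hypothesis only says $b\ge r^{2/3}$, with no upper bound on $b$, the $\ln b$ in the support estimate could a priori spoil the bound; the resolution is the monotonicity of $\ln x/x^2$, which pins the worst case at $b=r^{2/3}$. A minor secondary issue is that Corollary \ref{not_far} is stated only for the final round, so one must re-run its Azuma estimate at an arbitrary intermediate time $j\le r$ — but this is immediate since the variance budget at time $j$ is at most $r$.
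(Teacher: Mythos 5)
Your proof is correct and arrives at exactly the same per-round identity as the paper, namely that $\sum_{k}(X_k^j)^2 - \sum_{k}(X_k^{j-1})^2$ equals the number of balls that move in round $j$, which is $b$ minus the number of odd-count boxes, and that this defect is negligible because few boxes are occupied. The derivations differ in flavor: the paper constructs an explicit combinatorial pairing $f_j$ matching right-movers with left-movers in the same box (each pair contributing $(i+1)^2+(i-1)^2-2i^2=2$), while you get the identity by noting that $\sum_k(X_k)^2=\sum_i i^2 a_i^*$ is deterministic and hence equal to its expectation, then invoking orthogonality of martingale increments and computing the conditional one-step variance $1-(a_{j-1,z}\bmod 2)/a_{j-1,z}$. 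Both routes are sound and yield the clean identity $\sum_k(X_k)^2=br-\sum_{j=0}^{r-1}\#\{i:a_{j,i}\ \text{odd}\}$; the martingale argument is slightly slicker, the pairing argument more elementary. Your two flagged details are genuine but minor: the paper's ``$2\sqrt{2r\ln b}=o(b)$ by the assumption on $b$'' does implicitly rely on the monotonicity of $\ln x/x^2$ to pin the worst case at $b=r^{2/3}$ (no upper bound on $b$ is assumed), and the support bound of Corollary \ref{not_far} does need to be re-applied at intermediate times $j\le r$, which is immediate from the same Azuma estimate with a budget of $j\le r$ steps. Both the paper and your argument implicitly use these; you make them explicit.
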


\begin{proof}
Let us define $S_j = \sum_{k=1}^b (X_k^j)^2$. Clearly, $S_0 = 0$. Also let us define, for every round $j$ of the (randomized) Advantage Process, a bijection pairing $f_j: [b] \rightarrow [b]$ such that $f_j \circ f_j = Id_{[b]}$, and also for any ball $k$ the following holds: if ball $k$ did not ``move" during round $j$, $f_j(k) = k$. If ball $k$ moved during round $j$ from box $i$ to box $i+1$, then $f_j(k)$ is a ball that moved during round $j$ from box $i$ to box $i-1$. Similarly, if ball $k$ moved from box $i$ to box $i-1$, $f_j(k)$ is a ball that moved from $i$ to $i+1$.
Clearly such pairings exist, since the same number of balls move from $i$ to $i+1$ as the number of balls that move from $i$ to $i-1$. 

Now note that for a ball $k$, that moved from box $i$ during round $j$, we have \[\left(X_k^j\right)^2 + \left(X_{f_j(k)}^j\right)^2 - \left(\left(X_k^{j-1}\right)^2 + \left(X_{f_j(k)}^{j-1}\right)^2\right) = (i+1)^2 + (i-1)^2 - 2i^2 = 2.\]
In other words, every pair of balls in a box that moved from it (one increased and one decreased) contributed $2$ to the sum of $\left(X_k^j \right)^2$. It is also clear that for a ball $k$ that did not move, it holds that $\left(X_k^j\right)^2 = \left(X_k^{j-1}\right)^2$. Therefore, $S_j - S_{j-1}$ is precisely the number of balls that moved during round $j$. Due to Corollary \ref{not_far}, we know that the number of boxes with balls in it is at most $2\sqrt{2r\ln b} = o(b)$ (by the assumption on $b$). In other words, $(1-o(1))b$ balls move every round, and therefore $(1-o(1))b \leq S_j - S_{j-1} \leq b$ and thus $S_r = (1-o(1))br$.
\end{proof}

We can now prove Lemma \ref{many_far}, which is our goal for this subsection. Let us repeat its statement:

\manyfar*

\begin{proof}
Denote $h = \lfloor 0.1\sqrt{r} \rfloor, H = \lfloor \sqrt{5r} \rfloor$. Due to Lemma \ref{potential} and the fact that every $X_k$ has the same distribution, we know that $\mathbb{E}(X_1^2) = (1+o(1))r$. Also, due to Claim \ref{symmetry},  \[ \mathbb{E}(X_1^2) = \sum_{i=-\infty}^\infty i^2 \Pr (X_1=i) = 2\sum_{i=1}^\infty i^2 \Pr (X_1=i). \] We have \[ \sum_{i=1}^\infty i^2 \Pr (X_1=i) = \sum_{i=1}^{h} i^2 \Pr (X_1=i) + \sum_{i=h+1}^{H - 1} i^2 \Pr (X_1=i) + \sum_{i=H}^\infty i^2 \Pr (X_1=i). \] We will analyze each summand separately. The first two are simple:
\vspace{-.2cm}
\begin{align*}
\sum_{i=1}^{h} i^2 \Pr (X_1=i)& \leq h^2  \leq 0.01r,\\
\sum_{i=h+1}^{H - 1} i^2 \Pr (X_1=i)& \leq H^2 \Pr (X_1 \geq h+1) \leq 5r \Pr (X\geq 0.1\sqrt{r})
\end{align*}

For the last summand, by changing the order of summation, we have
\begin{equation*}
\begin{split}
\sum_{i=H}^\infty i^2 \Pr (X_1=i) & = H^2 \Pr (X_1\geq H) + \sum_{i=H+1}^\infty \left(i^2 - (i-1)^2\right) \Pr (X_1\geq i) \\
& \leq 5r \Pr (X_1 \geq 0.1\sqrt{r}) + \sum_{i=H+1}^\infty 2i \Pr (X_1\geq i).
\end{split}
\end{equation*}
To complete the analysis, we use Proposition \ref{tail_bound} and get that \[ \sum_{i=H+1}^\infty 2i \Pr (X_1\geq i) \leq \sum_{i=H+1}^\infty 2i e^{-\frac{i^2}{2r}} = (1+o(1))\int_{H+1}^{\infty}2xe^{-\frac{x^2}{2r}}dx = (1+o(1))2re^{-\frac{5}{2}}.\]
\\Finally, we can put it all together. We have: 
\[(1+o(1))\frac{r}{2} \leq 0.01r + 10r \Pr (X_1 \geq 0.1\sqrt{r}) + (1+o(1))2re^{-\frac{5}{2}},\] yielding
\[ \Pr (X_1 \geq 0.1\sqrt{r}) \geq \frac{1}{10}\left(\frac{1}{2} - 0.01 - (1+o(1))2e^{-\frac{5}{2}}\right) > 0.03\] for large enough $r$. Now recall the fact that $a_i^* = b \Pr (X_1 = i)$ for all $i$ to derive the statement.
\end{proof}

Note that in the above proof we did not optimize the constants. We were not able to match the constant from the upper bound, so we chose not to pursue the optimal constants possible for the sake of simplicity.

\subsubsection{Reduction}

In this subsection we wish to present Waiter's strategy. The main point we use in the strategy is Waiter's ability to force Client to simulate the Advantage Process. By doing so, Waiter affects the advantage of vertices in the graph, and causes them to behave like in the advantage process. Using our previous analysis of the process, Waiter can indeed reach a point where a vertex has an advantage of $c\sqrt{n\ln n}$, for some $c > 0$. We begin by proving the following lemma, which formalizes Waiter's ability to simulate the Advantage Process.

\begin{lemma} \label{single_stage}
Suppose Waiter and Client play an unbiased Waiter-Client game on $E(K_{B,R})$ for some sets $B, R$. Let $b=|B|, r=|R|$, and $\{a_i^*\}_{i=-\infty}^\infty$ be the result of the Advantage Process with parameters $b,r$. Then Waiter has a strategy to force the following: at some point in the game, for all $i$, exactly $a_i^*$ vertices in $B$ have advantage $i$.
\end{lemma}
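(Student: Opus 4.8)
The key idea is to have Waiter process the graph $K_{B,R}$ "one round of the Advantage Process at a time." Initially every vertex in $B$ has advantage $0$ (box $0$ contains all $b$ balls). I want Waiter to play so that after stage $j$ of his strategy, the multiset of advantages of the $B$-vertices is exactly $\{a_{j,i}^*\}$. The mechanism: the advantage $A(v)$ for $v \in B$ counts edges to $R$; it increases by $1$ if Waiter gives Client an edge $(v,u)$ and decreases by $1$ if Waiter keeps it. So if Waiter, in a single round, offers Client two edges $(v, u_1)$ and $(v', u_2)$ incident to two *different* $B$-vertices $v \neq v'$, then whichever edge Client takes, one of $v, v'$ has its advantage increase by $1$ and the other decrease by $1$. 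Pairing up $B$-vertices and forcing Client to "split" each pair is exactly the "half move right, half move left" rule of the Advantage Process (the leftover, when a box has an odd count, corresponds to a vertex Waiter simply leaves untouched for that stage).

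So the plan is as follows. Enumerate $R = \{u_1, \dots, u_r\}$; I will use vertex $u_j$ of $R$ in stage $j$, so $r$ stages total, matching the $r$ rounds of the process. At stage $j$, look at the current advantage classes: for each box value $i$, there are (by induction) exactly $a_{j-1,i}^*$ vertices of $B$ with advantage $i$. Within each such class, arbitrarily pair up the vertices, leaving one unpaired iff the class size is odd. For each pair $\{v, v'\}$ (both in the same class, hence same current advantage $i$), Waiter offers Client the two edges $(v, u_j)$ and $(v', u_j)$; Client takes one, Waiter the other, so one of $v, v'$ moves to advantage $i+1$ and the other to $i-1$. The unpaired vertex in an odd class keeps advantage $i$ (Waiter does not offer any edge through $u_j$ touching it this stage). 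After processing all pairs in all classes, the new advantage distribution is precisely $a_{j,i}^* = \lfloor a_{j-1,i-1}^*/2\rfloor + \lfloor a_{j-1,i+1}^*/2\rfloor + (a_{j-1,i}^* \bmod 2)$, which is the defining recursion of the Advantage Process. After stage $r$, the distribution is exactly $\{a_i^*\}$, as required; this is the "point in the game" the lemma asserts.

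A couple of bookkeeping points need to be checked but are routine. First, feasibility of the moves: each stage $j$ only uses edges incident to the single $R$-vertex $u_j$, and within a stage each $B$-vertex is touched at most once, so no edge is ever offered twice across the whole strategy; also, each such edge is free when offered since $u_j$ is untouched before stage $j$. Second, the pairing at stage $j$ is well-defined because the class sizes are exactly the $a_{j-1,i}^*$, which is where the induction hypothesis is used; one should also note the classes have finite support (Corollary \ref{not_far}, or simply because only finitely many vertices exist), so only finitely many pairs are formed. Waiter may of course have additional free edges and continue the overall game afterward, but the lemma only concerns reaching the target distribution "at some point," so we stop the analysis there.

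The main (really the only) obstacle is conceptual rather than technical: recognizing that Waiter's act of offering two edges at the *same* $R$-vertex but *different* $B$-vertices in the same advantage class implements exactly one "split" of the process, and that cycling through the $r$ vertices of $R$ gives exactly $r$ rounds. Once that correspondence is set up, the proof is a clean induction on the stage number $j$ with the invariant "for all $i$, exactly $a_{j,i}^*$ vertices of $B$ have advantage $i$"; the base case $j=0$ is immediate and the inductive step is the recursion above.
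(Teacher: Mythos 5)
Your proposal is correct and follows essentially the same approach as the paper: induct on the stage $j$ (one stage per $R$-vertex), pair up $B$-vertices within each advantage class, and offer each pair $\{v,v'\}$ the two edges $(v,u_j),(v',u_j)$ so that Client's choice implements one ``split'' of the Advantage Process.
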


\begin{proof}
Waiter's strategy consists of $r$ phases, each phase corresponding to a ``round" in the Advantage Process. Suppose $B = \{v_1, v_2, \dots, v_b\}$ and $R = \{u_1, u_2, \dots, u_r\}$, and let $a_{j,i}$ be as defined in the Advantage Process (with parameters $b, r$). We prove that Waiter can ensure that after phase $j$, these two conditions hold:

\begin{itemize}
\item For all $i$, exactly $a_{j,i}$ vertices in $B$ have advantage $i$.
\item For all $j' > j$, all edges touching $u_{j'}$ are still free and have not been offered.
\end{itemize}

For the induction basis, it is obvious that after phase $0$ (that is, right in the beginning of the game) both conditions hold. Suppose phase $j$ has just finished, and the two conditions hold.  At this moment, define $B_{j,i} = \{v \in B | A^j(v) = i\}$. By assumption, we know $|B_{j,i}| = a_{j,i}$. We now describe Waiter's moves during the $(j+1)$-th phase: for each $i$, Waiter arbitrarily chooses distinct pairs of vertices $v, v' \in B_{j,i}$ and offers Client the edges $(v,u_{j+1}), (v', u_{j+1})$. Waiter does this until $(v,u_{j+1})$ was offered for every $v \in B_{j,i}$, except maybe for one in the case that $a_{j,i}$ is odd. Phase $j+1$ ends when Waiter completes doing this for every $i$.

All that is left to do is prove that the conditions hold. The second condition clearly holds, because during phase $j+1$ Waiter only offered edges touching $u_{j+1}$. As for the first condition, let us count how many vertices have advantage $i$ at the end of phase $j+1$: during phase $j+1$, every vertex $v \in B$ belonged to at most one edge offered, thus its advantage could not change by more than $1$. How many vertices in $B_{j,i-1}$ had their advantage increased? Exactly the number of such $v \in B_{j,i-1}$ which had $(v,u_{j+1})$ offered and taken by Client. Such edges were only offered in the $\left \lfloor \frac{a_{j,i-1}}{2} \right \rfloor$ pairs offered to Client of the form $(v,u_{j+1}), (v', u_{j+1})$ where $v, v' \in B_{j,i-1}$. Since exactly one edge of each such pair was chosen by Client, exactly  $\left\lfloor \frac{a_{j,i-1}}{2} \right\rfloor$ vertices had advantage $i - 1$ and increased their advantage to $i$.

In a similar fashion, exactly $\left \lfloor \frac{a_{j,i+1}}{2} \right \rfloor$ vertices had advantage $i+1$ and decreased their advantage to $i$ (these are the vertices whose edges were not chosen, from the pairs offered from $B_{j,i+1}$. Also, all of the vertices in $B_{j,i}$ increased or decreased their advantage by $1$, except for one vertex if $a_{j,i}$ is odd. Therefore, the number of vertices with advantage $i$ at the end of phase $j+1$ is exactly $\left \lfloor \frac{a_{j,i-1}}{2} \right \rfloor + \left \lfloor \frac{a_{j,i+1}}{2} \right \rfloor + (a_{j-1,i}\, mod  \ 2) = a_{j+1, i}$. 

Following this strategy, Waiter completes phase $r$ and the two conditions hold. In particular, exactly $a_{r,i} = a_i^*$ vertices have advantage $i$.
\end{proof}

We can now prove our goal for this subsection. We repeat it's statement:
\lowbound*

\begin{proof}
We describe an explicit strategy for Waiter. Let $s = \left\lceil\frac{\ln n}{15} \right\rceil$, and arbitrarily choose pairwise disjoint sets of vertices $R_1, R_2, \dots, R_s \subset V$, each of size $|R_i| = \left\lceil \frac{n}{\ln n} \right\rceil$. Denote $R = R_1 \cup R_2 \cup \dots \cup R_s$ and $A_1 = V \setminus R$ be the remaining vertices, and note that $|A_1| = n - s|R_1| > 0.9n$. Waiter's strategy works in $s$ stages. During the game, Waiter maintains a set of vertices disjoint from $R$ (starting with $A_1$, and updated to $A_{j+1} \subseteq A_j$ after stage $j$). During stage $j$, Waiter significantly increases the advantage of many vertices in $A_j$, playing only on the edges between $A_j$ and $R_j$. To put it formally: at the end of stage $j$, we define $A_{j+1} = \{v \in A_j | A^{R_j}(v) \geq 0.1\sqrt{\frac{n}{\ln n}}\}$, where $A^{R_j}(v)$ is the advantage of $v$ in $R_j$ (at the end of stage $j$). Waiter has a strategy to maintain the following conditions:

\begin{itemize}
\item During stage $j$, only edges between $A_j$ and $R_j$ are offered.
\item $|A_{j+1}| \geq 0.03|A_j|$.
\end{itemize}

First we explain why if Waiter maintains these conditions, he wins and we are done. We start by claiming that $A_{s+1}$ is not empty. By the second condition we know that 
\[|A_{s+1}| \geq 0.03^s|A_1| > 0.9n(\frac{1}{n})^\frac{1}{3} > 0.\]
Actually, it turns out that $A_{s+1}$ is pretty large! Take some vertex $v \in A_{s+1}$. It is easy to see that $A(v) = \sum_{j=1}^s A^{R_j}(v)$, since no edges touching $v$ were offered except edges to $R$, and since $R_j$ are pairwise disjoint. Since $v \in A_{s+1}$, we know that  $v \in A_j$ for all $j$, and this means $A^{R_j}(v) \geq 0.1\sqrt{\frac{n}{\ln n}}$. In total, $A(v) \geq 0.1s\sqrt{\frac{n}{\ln n}} \geq \frac{\sqrt{n\ln n}}{150}$. Using Claim \ref{advantage_win}, it now follow that Waiter can ensure that $d^C(v) \geq \frac{n}{2} + \frac{\sqrt{n\ln n}}{300} - 1 >  \frac{n}{2} + \frac{\sqrt{n\ln n}}{500}$ by the end of the game.

We will now prove that Waiter can maintain the above stated conditions. During stage $j$, Waiter applies Lemma \ref{single_stage}, with $B = A_{j}$ and $R = R_j$. Note that indeed, due to the first condition (held in previous stages), all of the edges between $A_{j}$ and $R_j$ are free. Also note that since Waiter applies the strategy given in Lemma \ref{single_stage}, it is ensured that the first condition holds for stage $j$ as well.
By Lemma \ref{single_stage}, we are guaranteed that exactly $a_i^*$ vertices  $v \in A_{j}$ will have $A^{R_j}(v) = i$. By Lemma \ref{many_far}, we know that, given that $b \geq r^{\frac{2}{3}}$, at least $0.03b$ balls are in a box at least as large as $0.1\sqrt{r}$. In our case, $b = |A_{j}|$ and $r = \left \lceil \frac{n}{\ln n} \right \rceil$, so we have that $|A_{j+1}| = |\{v \in A_{j} | A^{R_j}(v) \geq 0.1\sqrt{\frac{n}{\ln n}}\}| \geq 0.03|A_{j}|$. We only need to justify that indeed $b \geq r^{\frac{2}{3}}$; This is done easily by noticing that $|A_{j}| >  0.03^s|A_1| > 0.9n(\frac{1}{n})^\frac{1}{3} = 0.9n^\frac{2}{3} > \left \lceil \frac{n}{\ln n} \right \rceil^\frac{2}{3}$. This completes the proof.
\end{proof}

\section{Biased Case} \label{biased_section}

\subsection{Large Bias}
In this subsection we will analyze the Waiter-Client Maximum Degree game played with a large bias ($q=\omega \left(\frac{n}{\ln n}\right)$), proving tight results for the degree that Waiter can force, and that Client can avoid, playing $WC_\Delta(n, q, D)$.

Recall that we defined a function $f(n,q)$ as follows: $f(n,q) = \sup \{d | (d(q+1))^d < n^{d+1} \}$. It is quite clear that $f(n,q)$ is finite for all $n,q$. We prove the following results (as stated in Section \ref{introuction_section}):

\largebiasinf*

\largebiasconst* 

The first result practically solves the maximum degree game for bias much larger than $\frac{n}{\ln n}$, but still small enough to be $n^{1+o(1)}$. The second result solves the maximum degree game for larger bias, except for very specific intervals ($q \in [c_d n^{1 + \frac{1}{d}}, C_d n^{1 + \frac{1}{d}}]$).

Both results use the same simple strategies for Waiter and Client. We begin by analyzing Client's side, which amounts to applying the general criteria for Client's win.

\begin{proposition}\label{large_bias_client}
Let $d,q$ positive be integers and suppose that $(\frac{d(q+1)}{e})^d > n^{d+1}$. Then playing a Waiter-Client game on the edges of $K_n$ with bias $q$, Client has a strategy to ensure all of his vertices' degrees are at most $d-1$.
\end{proposition}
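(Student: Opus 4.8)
The plan is to apply Theorem~\ref{client_general_criteria} (the Erdős–Selfridge-type criterion for Client's win) to a suitable family of winning sets. The natural candidate for $\mathcal{F}$ is the family of all $d$-element subsets of edges all incident to a common vertex: if Client can avoid claiming every edge of such a set for every vertex $v$, then in particular he never accumulates $d$ edges at $v$, so $d_C(v) \le d-1$ for all $v$. Concretely, for each vertex $v$ and each $d$-subset $S$ of the $n-1$ edges incident to $v$, put the set $S$ (viewed as a subset of $E(K_n)$, the ground set $X$) into $\mathcal{F}$. Every member of $\mathcal{F}$ has size exactly $d$, and there are $n\binom{n-1}{d}$ of them (with multiplicity if an edge's two endpoints each contribute, but that only inflates the count, which is fine for an upper bound).

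Then I would bound the Erdős–Selfridge sum: $\sum_{A \in \mathcal{F}} (q+1)^{-|A|} \le n\binom{n-1}{d}(q+1)^{-d}$. Using $\binom{n-1}{d} \le \frac{n^d}{d!}$ together with the standard inequality $d! \ge (d/e)^d$, this is at most $n \cdot \frac{n^d}{(d/e)^d} \cdot (q+1)^{-d} = n^{d+1}\left(\frac{e}{d(q+1)}\right)^d$. The hypothesis $\left(\frac{d(q+1)}{e}\right)^d > n^{d+1}$ is exactly the statement that this quantity is strictly less than $1$. Hence the hypothesis of Theorem~\ref{client_general_criteria} is satisfied, Client has a winning strategy in the game $(E(K_n), \mathcal{F})$, and following that strategy Client ensures he never fills any $d$-subset of edges at a common vertex — i.e. $\Delta(G_C) \le d-1$.

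The only step requiring a little care is the combinatorial estimate, and in particular making sure the multiplicity issue (an edge $\{u,v\}$ is incident to two vertices, so the same $d$-set of edges could be counted from several "centers") does not break anything; since the criterion tolerates repeated sets in $\mathcal{F}$ and we only need an upper bound on the sum, this is harmless — each winning set appears at most $d$ times, which would only cost an extra factor $d$, and in any case we are already overcounting by using $n\binom{n-1}{d}$. I do not anticipate a genuine obstacle here: the proposition is essentially a direct substitution into the stated criterion, and the subsequent Theorems~\ref{large_bias_inf} and \ref{large_bias_const} will follow by choosing $d$ appropriately in terms of the definition of $f(n,q)$ and absorbing the $e$-factor into the $(1+o(1))$ or into the constants $C_d$.
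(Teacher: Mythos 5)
Your proof is correct and takes essentially the same route as the paper: apply Theorem~\ref{client_general_criteria} to the family of $d$-edge stars, bound the sum by $n\binom{n-1}{d}(q+1)^{-d} \le n\left(\frac{ne}{d(q+1)}\right)^d$, and invoke the hypothesis. The side remark about multiplicity is unnecessary but harmless (for $d\ge 2$ each $d$-star has a unique center anyway, and the criterion explicitly allows repeated sets).
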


\begin{proof}
We just need to check that the general criterion given by Theorem \ref{client_general_criteria} holds. There are exactly $n {n-1 \choose d}$ winning sets, each of size exactly $d$. Thus the criteria says that Client has a strategy to win (that is, avoid any vertex with degree $d$) if $n {n-1 \choose d}(q+1)^{-d} < 1$. Indeed, we have that $n {n-1 \choose d}(q+1)^{-d} < n\left(\frac{ne}{d(q+1)}\right)^d < 1$ by assumption.
\end{proof}

The above proposition leads us to define $D_n^q := min\{d \in \mathbb{N} | \left(\frac{d(q+1)}{e}\right)^d > n^{d+1}\}$. We now know that Client can always ensure his maximum degree is less than $D_n^q$, when playing the maximum degree game with bias $q$. When we will present Waiter's strategy, a similar expression will appear. With foresight, we define now $d_n^q := max\{d \in \mathbb{N} |3\left(4d(q+1)\right)^d < n^{d+1}\}$.

\begin{lemma}\label{large_bias_waiter}
Playing a Waiter-Client game on the edges of $K_n$ with bias $q$, Waiter has a strategy to force Client to have a vertex with degree at least $d_n^q$.
\end{lemma}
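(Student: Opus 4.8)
The plan is to have Waiter build up one vertex's Client-degree greedily, working on the star around a single vertex $v$ at a time, and to show that the desired degree $d_n^q$ is small enough that he can afford to ``waste'' the rest of $K_n$ while doing so. The natural setup: Waiter fixes a vertex $v_1$ and repeatedly offers Client batches of $q+1$ edges, all of them incident to $v_1$, among edges not yet offered. Client is forced to take exactly one of the $q+1$ edges in each batch, so after $k$ such batches $v_1$ has Client-degree exactly $k$ (as long as there are still $q+1$ free edges at $v_1$). Since $v_1$ has $n-1$ incident edges, Waiter can run roughly $\frac{n-1}{q+1}$ such batches at $v_1$; if $d_n^q \le \frac{n-1}{q+1}$ we would be immediately done, but in general $d_n^q$ may be larger than $n/(q+1)$ (indeed for very large $q$ it is a small constant, but the interesting regime has $d_n^q$ possibly comparable to or larger than $n/(q+1)$), so one round of star-exhaustion at a single vertex is not enough.

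The fix is a recursive / multi-vertex approach: after exhausting the free edges at $v_1$, $v_1$ has some Client-degree $k_1 \approx (n-1)/(q+1)$; if this already meets $d_n^q$ we stop, otherwise Waiter moves to a second vertex $v_2$, exhausts its free edges (those not already claimed while working on $v_1$), and so on. But a cleaner bookkeeping, and the one I expect the authors use, is to think of it as Waiter forcing Client to fill up a partial structure: Waiter offers, in each round, $q+1$ free edges all incident to the current ``active'' vertex; Client's choice pins down one more Client-edge at that vertex. The key counting is to compare the number of rounds Waiter needs against the number of edges available, and the definition $d_n^q = \max\{d : 3(4d(q+1))^d < n^{d+1}\}$ is exactly calibrated so that this works — the factor $n^{d+1}$ reflects that one ``loses'' roughly a factor of $n$ per unit of degree due to edges being consumed by Waiter, and the $(4d(q+1))^d$ term reflects the cost of $d$ rounds each consuming $\sim q+1$ edges at $\sim d$ vertices. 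Concretely I would: (1) set $d = d_n^q$ and fix a set $S$ of $d$ target vertices plus enough ``spare'' vertices; (2) describe a round-robin strategy in which Waiter raises each target vertex's Client-degree by offering $q+1$-edge batches incident to it, being careful that the batches use only still-free edges; (3) show by the inequality $3(4d(q+1))^d < n^{d+1}$ that the total number of edges consumed — at most $d$ batches of $q+1$ edges per target vertex, times $d$ target vertices, plus overhead — never exhausts the available incidences, so Waiter can always find $q+1$ free edges to offer at the active vertex until that vertex reaches Client-degree $d$.

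The main obstacle I anticipate is the edge-accounting when Waiter is forced to switch vertices: edges claimed by Waiter (the $q$ losers in each batch) while working on vertex $v_i$ permanently remove incidences at the other target vertices $v_j$, so Waiter cannot independently run $d$ batches at each of the $d$ vertices — the batches interfere. The resolution is either (a) to use a large pool of ``helper'' vertices $R$ disjoint from the $d$ targets, always offering edges between the active target vertex and $R$, so that a batch at $v_i$ only consumes edges incident to $R$ and never consumes an edge between two target vertices (this costs $|R| \ge d(q+1)$ helper vertices per target, hence $\sim d^2(q+1)$ helpers, still $\ll n$ by the lemma's hypothesis), or (b) to argue directly that even with interference the total edge consumption $d \cdot d \cdot (q+1)$ is dominated by $n^{d+1}$-type bounds. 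I would go with (a): it makes the batches genuinely independent, and the inequality $3(4d(q+1))^d < n^{d+1}$ comfortably supplies the $n \gg d^2(q+1)$ (and more) needed to carve out the helper set and run the process. The only remaining care is the endgame — when fewer than $q+1$ free edges remain Waiter takes them all — but since Waiter controls which edges to deplete, he simply makes sure the last batch at the last target vertex is completed before the global pool runs low, which the slack in the inequality guarantees.
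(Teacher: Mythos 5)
Your plan has a genuine gap in the edge-accounting, and it breaks precisely in the regime the lemma is actually about. You propose that Waiter repeatedly offers $q+1$ free edges \emph{all incident to the same target vertex} $v_i$, so that Client is forced to raise $d_{G_C}(v_i)$ by one per batch. A single batch then consumes $q+1$ incidences at $v_i$, so raising $v_i$ to Client-degree $d$ consumes $d(q+1)$ of its $n-1$ incidences. When $d(q+1) > n-1$, this is impossible: $v_i$ runs out of incident edges long before reaching degree $d$, no matter how many helper vertices sit on the other end of the offered edges. But $d(q+1)>n$ is exactly the hard case --- the paper disposes of $d(q+1)\le n$ in one line (the average Client-degree already exceeds $d-1$, so Waiter wins no matter how he plays) and then explicitly \emph{assumes} $d(q+1)>n$. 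Your helper-set variant (a) and your global count ``$d^2(q+1)$ edges versus $\binom{n}{2}$'' do not repair this, because the binding constraint is the \emph{per-vertex} incidence budget at the target vertex, not the global edge budget. Moving on to a fresh vertex $v_2$ after exhausting $v_1$ does not help either: $v_1$'s Client-degree is then stuck below $(n-1)/(q+1)$ forever.

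The paper's strategy avoids this by never concentrating all $q+1$ offered edges at a single vertex. Waiter starts with a pool $A$ of $r^d$ candidate vertices, where $r\approx 4d(q+1)/n$, and in each ``mega round'' partitions the surviving candidates into groups of $r$; for each group he offers roughly $(q+1)/r$ edges \emph{from each of the $r$ vertices} to a disjoint helper set $B$. Client's one chosen edge lands on exactly one of the $r$ vertices, which ``survives'' with its Client-degree incremented; the other $r-1$ are discarded. Thus each surviving candidate spends only about $(q+1)/r \approx n/(4d)$ of its incidences per mega round --- well within budget over $d$ rounds --- at the price that only a $1/r$ fraction of candidates advance each round. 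Surviving $d$ rounds therefore requires $r^d$ starting candidates, which must fit inside $V(K_n)$; the condition $3(4d(q+1))^d < n^{d+1}$ is precisely $r^d < n/3$. So the key idea you are missing is to distribute each batch of $q+1$ offered edges across many candidate vertices and filter the candidate pool exponentially, rather than to pound on a single vertex with a full batch.
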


\begin{proof}
For the simplicity and readability of the proof, we omit the subscript and superscript of $d_n^q$, and for this proof we use $d$ instead. We just need to remember that $3(4d(q+1))^d < n^{d+1}$. First, note that if $d(q+1) \leq n$, then the average degree in Client's graph is larger than $d-1$. Looking at the definition of $d_n^q$, it is clear that $d < \frac{n}{2}$ for any possible $q$, and so we can bound the average degree in Client's graph:
\[ \bar{d}(G_C) = \frac{2}{n} \left\lfloor \frac{n(n-1)}{2(q+1)} \right\rfloor \geq  \frac{2}{n} \left\lfloor \frac{d(n-1)}{2} \right\rfloor  \geq \frac{dn - d - 1}{n} > \frac{dn - n}{n} = d-1\].

In this case Waiter wins no matter how he plays. Therefore, we assume that $d(q+1) > n$. Let $r = \left \lceil \frac{3d(q+1)}{n} \right \rceil \leq \frac{4d(q+1)}{n}$ and note that by assumption $r^d < n/3$. Let $A \subset V$ be an arbitrary set of $r^d$ vertices, and let $B = V \setminus A$. Note that $|A| < n/2, |B| > n/2$. Waiter will play only on the edges between $A$ and $B$, and will guarantee that a vertex in $A$ will reach degree at least $d$.
Waiter will play $d$ ``mega rounds", and will maintain sets $A_i \subseteq A$ such that before mega round number $i$, the following holds:
\begin{itemize}
\item $|A_{i}| = r^{d - i + 1}$;
\item For all $v \in A_{i}$, the number of free edges from $v$ to $B$ is at least $\frac{n}{2d}(d-i+1)$;
\item For all $v \in A_{i}$, $deg_{G_C}(v) \geq i-1$.
\end{itemize}
Assuming the above, Waiter plays in the following way: during mega round $i$, Waiter arbitrarily divides $A_{i}$ into $r^{d - i}$ sets $S_1, \dots, S_{r^{d - i}}$ of $r$ vertices each. For each set $S_j$, Waiter offers $\left \lceil \frac{q+1}{r} \right \rceil$ edges from each vertex in $S_j$ to arbitrary vertices in $B$ (thus supposedly offering at least $q+1$ edges in total. Actually, Waiter can choose any arbitrary $q+1$ edges of these). Client then chooses one edge, which touches a vertex in $S_j$. Let $v_j$ be the vertex in $S_j$ whose edge was chosen by Client. Let $A_{i+1} = \{v_j\}_{j=1}^{r^{d - i}}$, and note that $|A_{i+1}| = r^{d - i}$. Also, every vertex $v_j \in A_{i+1}$ had at least $\frac{n}{2d}(d-i+1)$ free edges to $B$ before mega round $i$, and during mega round $i$ Waiter offered at most  $\left \lceil \frac{q+1}{r} \right \rceil < \frac{n}{3d} + 1 < \frac{n}{2d}$ edges containing $v_j$, so now it has at least $\frac{n}{2d}(d-i)$ free edges to $B$. Moreover, every vertex $v_j \in A_{i+1}$ satisfied $deg_{G_C}(v_j) \geq i-1$ before mega round $i$, and during mega round $i$ an edge containing it was chosen by Client, so $deg_{G_C}(v) \geq i$ before mega round $i+1$. This yields that indeed $A_{i+1}$ satisfies the three conditions above. Noting that $A_1 = A$ satisfies all the conditions for $i = 1$, we have our induction base and we can conclude that after $d$ mega rounds, Waiter is left with a set $A_{d+1}$ of size $r^0 = 1$, and the sole vertex $v \in A_{d+1}$ satisfies $deg_{G_C}(v) \geq d$, establishing the claim.
\end{proof}

Armed with Proposition \ref{large_bias_client} and Lemma \ref{large_bias_waiter}, we can now prove the main results for this section.

Proving Theorem \ref{large_bias_const} is quite straightforward: For Client's side, it is easy to see that when $q > \frac{e}{d} n^{1 + \frac{1}{d}}$, the assumption of Proposition \ref{large_bias_client} holds. For Waiter's side, when $q < \frac{1}{12d} n^{1 + \frac{1}{d}}$ the assumption of Lemma \ref{large_bias_waiter} holds.

Proving Theorem \ref{large_bias_inf} will also be simple, but a bit technical. What we claim is that, when the conditions of Theorem \ref{large_bias_inf} hold, then $D_n^q = (1 + o(1))d_n^q = (1+o(1))f_n^q$. Verifying this, and combining it with the Proposition \ref{large_bias_client} and Lemma \ref{large_bias_waiter} proves the theorem.

\begin{proposition}
Suppose $q = \omega\left(\frac{n}{\ln n}\right)$ and also $q = n^{1+o(1)}$. Then 
\[ D_n^q = (1 + o(1))d_n^q = (1+o(1))f(n,q) .\]
\end{proposition}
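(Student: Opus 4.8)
The plan is to show that the three quantities $D_n^q$, $d_n^q$ and $f(n,q)$ are all asymptotically equal by analyzing the defining inequalities $(d(q+1))^d < n^{d+1}$ (for $f$), $(\frac{d(q+1)}{e})^d > n^{d+1}$ (for $D_n^q$) and $3(4d(q+1))^d < n^{d+1}$ (for $d_n^q$). Taking logarithms, all three conditions have the form $d\ln(c_i\, d(q+1)) \lessgtr (d+1)\ln n + O(1)$ for constants $c_i \in \{1, e^{-1}, 4\}$, which differ only by additive $O(d)$ terms after taking logs. So the strategy is: (i) obtain a clean closed-form estimate for the common growth rate $f := f(n,q)$ as $n \to \infty$, and (ii) show that replacing the constant $c_i$ perturbs the solution of the inequality by only a $(1+o(1))$ factor, given the hypotheses $q = \omega(n/\ln n)$ and $q = n^{1+o(1)}$.

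First I would pin down the order of magnitude of $f = f(n,q)$. Writing $L = \ln(q+1)$, the inequality $d\ln(d(q+1)) < (d+1)\ln n$ rearranges (for large $d$) to roughly $d(\ln d + L) \approx (d+1)\ln n$, i.e. $d \approx \frac{(d+1)\ln n}{\ln d + L}$. Since $q = n^{1+o(1)}$ we have $L = (1+o(1))\ln n$, and since $q = \omega(n/\ln n)$ we have $L \geq \ln n - \ln\ln n - O(1) = (1-o(1))\ln n$ with $L + \ln\ln n \to \infty$ relative to... more carefully, the hypotheses give $\ln n - L = o(\ln n)$ but also we need $\ln d \ll L$. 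From $d \approx \frac{(d+1)\ln n}{\ln d + L}$ one gets $d = \Theta\!\left(\frac{\ln n}{\ln d + L}\right)\cdot(1+o(1))$... actually $d+1 \approx d$ and $\ln d = O(\ln\ln n)$ while $L = (1+o(1))\ln n \to \infty$, so $\ln d = o(L)$ and hence $f = (1+o(1))\frac{\ln n}{L} = (1+o(1))\frac{\ln n}{\ln q}$, which under $q = n^{1+o(1)}$ is $(1+o(1))$ times a constant or slowly growing quantity; in any case $f \to$ a value with $f = o(\ln n)$ and, crucially, $\ln f = o(\ln n)$ and $\ln f = o(L)$. The key structural fact to extract is: \emph{$\ln f = o(\ln(q+1))$ and $f = \omega(1)$ or at least $f$ is large enough that the $O(1)$ and $O(f)$ corrections below are lower order.} One must handle the edge case where $f$ stays bounded (e.g. $q = cn$, $f \sim \ln n/\ln\ln n \to \infty$, so actually $f \to \infty$ always under these hypotheses since $\ln q = (1+o(1))\ln n$ forces $f \sim \ln n / \ln q$ which $\to \infty$ iff $\ln q = o(\ln n)$... wait, if $q = \Theta(n)$ then $\ln q \sim \ln n$ so $f \sim \ln n/\ln n = O(1)$??). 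I need to redo this: when $q = cn$, $\ln(q+1) \sim \ln n$, so $d(\ln d + \ln n) \approx (d+1)\ln n$ gives $d\ln d \approx \ln n$, so $d \sim \ln n/\ln\ln n \to \infty$. Good — so $f \to \infty$ in this regime, and the correct heuristic is $f(\ln f + \ln(q+1) - \ln n) \approx \ln n$, and since $\ln(q+1) - \ln n = o(\ln n)$ this is a more delicate balance; but in all cases $f \to \infty$, which is what matters.

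Given $f \to \infty$, the comparison step is then routine: to go from the defining inequality of $f(n,q)$ to that of $d_n^q$, note that for an integer $d$, $3(4d(q+1))^d < n^{d+1}$ is equivalent to $d\ln(d(q+1)) < (d+1)\ln n - d\ln 4 - \ln 3$, i.e. the same inequality defining $f$ but with the right-hand side decreased by $d\ln 4 + \ln 3$. Since $f \to \infty$ and $\frac{d}{d\text{-coefficient of }\ln n} \to 0$... precisely, the solution $d$ changes by a factor $1 + O(\ln 4 / \ln n) \cdot(\text{something})$; I would make this rigorous by plugging $d = (1-\varepsilon)f$ and $d = (1+\varepsilon)f$ into both inequalities and using monotonicity of $g(d) := d\ln(d(q+1)) - (d+1)\ln n$ in $d$ (for $d(q+1) > n$, which holds), showing $g((1-o(1))f) < -\Omega(f\ln 4)$-type slack absorbs the perturbation, hence $d_n^q \geq (1-o(1))f$; the reverse inequality $d_n^q \leq f$ follows since the $d_n^q$-inequality is strictly stronger than the $f$-inequality. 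The identical argument with constant $1/e$ in place of $4$ handles $D_n^q = (1+o(1))f$; here the inequality is of opposite direction ($>$), so $D_n^q$ is the \emph{smallest} $d$ with $g(d) > d\ln(1/e) = -d$, versus $f$ the \emph{largest} with $g(d) < 0$, and one checks these two thresholds are within a $(1+o(1))$ factor by the same derivative estimate. The main obstacle is purely bookkeeping: getting the asymptotic solve of the transcendental inequality $g(d) \approx 0$ clean enough — in particular verifying $\ln f = o(\ln n)$ robustly across the whole range $\omega(n/\ln n) \le q \le n^{1+o(1)}$, including the slowly-varying boundary cases — and making the "perturb the RHS by $O(d)$, the solution moves by a $(1+o(1))$ factor" lemma precise, which hinges on $\frac{\partial g}{\partial d} = \ln(d(q+1)) + 1 - \ln n = \ln\!\big(\frac{d(q+1)}{n}\big) + 1$ being positive and of order $\ln f + o(\ln n)$, so that a shift of $O(d)$ in $g$ corresponds to a shift of $O\!\big(\frac{d}{\ln f + o(\ln n)}\big) = O\!\big(\frac{d \cdot d}{(d+1)\ln n - (\text{correction})}\big)$ in $d$; since this is $o(d)$, we are done.
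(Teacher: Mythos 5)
Your proposal follows essentially the same route as the paper's: start from $d_n^q \leq f(n,q) \leq D_n^q$ (immediate from the definitions), then argue that the constants $1/e$, $1$, $4$ in the three defining inequalities perturb the threshold $d$ by only a $(1+o(1))$ factor, after first pinning down the order of magnitude of $f$. The paper implements the final step by a direct substitution $d \mapsto (1+\varepsilon)(d_n^q + 1)$ into the $D_n^q$-inequality, checking that the resulting error exponent $\varepsilon\ln n + a_\varepsilon + b_\varepsilon d$ is eventually positive; you propose instead an abstract perturbation lemma via an estimate of $\partial g/\partial d$ for $g(d) = d\ln(d(q+1)) - (d+1)\ln n$. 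These are the same calculation in slightly different clothes.

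There is one genuine point of confusion you should fix. You repeatedly single out ``$\ln f = o(\ln(q+1))$ and $f = \omega(1)$'' as the key structural facts, and at the end call verifying ``$\ln f = o(\ln n)$ robustly'' the main obstacle. But your own derivative computation needs the much stronger statement $f = o(\ln n)$ (equivalently $d_n^q = o(\ln n)$): the shift in $d$ you derive is $O\bigl(d^2/((d+1)\ln n - \text{corr.})\bigr)$, and this is $o(d)$ precisely when $d = o(\ln n)$. Note that $\ln f = o(\ln n)$ is compatible with $f$ as large as $n^{0.01}$, where your argument would collapse. The paper establishes $d_n^q = o(\ln n)$ as a standalone step, and that is exactly where the hypothesis $q = \omega(n/\ln n)$ earns its keep: one needs the full strength $q\ln n/n \to \infty$ (so $\ln q + \ln\ln n - \ln n \to \infty$), not merely a lower bound of the form $\ln q \geq \ln n - \ln\ln n - O(1)$, to rule out $d_n^q \geq c\ln n$. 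Your wavering on the closed form of $f$ (the false start giving $f \sim \ln n/\ln q = O(1)$ for $q = cn$) also stems from not cleanly separating the two inputs the perturbation step actually consumes: $d_n^q = \omega(1)$, driven by $q = n^{1+o(1)}$, and $d_n^q = o(\ln n)$, driven by $q = \omega(n/\ln n)$. Once those two are nailed down as lemmas, the rest of your comparison goes through essentially as you describe.
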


\begin{proof}
First of all, it is clear that $d_n^q \leq f(n,q) \leq D_n^q$ by their definitions. Therefore it suffices to show that for any fixed $\varepsilon > 0$ and large enough $n$ it holds that $D_n^q \leq (1+\varepsilon)d_n^q$.

We begin by proving $d_n^q = \omega(1)$. Suppose by contradiction that $d_n^q < C$ infinitely often for some constant $C > 0$. Then for infinitely many values of $n$ we have $3(4C(q+1))^C \geq n^{C+1}$. Since $q = n^{1+o(1)}$, we have $3(4C)^C \geq n^{C + 1 - C(1+o(1))}$, a contradiction for large enough $n$.

Next we claim that $d_n^q = o(\ln n)$. Suppose by contradiction that $d_n^q > c \ln n$ infinitely often for come constant $c > 0$. Then for infinitely many values of $n$ we have $3(4c \ln n(q+1))^{c \ln n} < n^{c \ln n+1}$. That is a contradiction because 
\[3(4c \ln n(q+1))^{c \ln n} > \left(4c \ln n \omega\left(\frac{n}{\ln n}\right)\right)^{c \ln n} = (4c\omega(n))^{c \ln n} > n^{c \ln n + 1}\]
for large enough $n$.

Now we fix $\varepsilon > 0$ and show that  $D_n^q \leq (1+\varepsilon)d_n^q$. Let $d = d_n^q + 1$ and recall that $(4d(q+1))^d > \frac{n^{d+1}}{3}$. Now we calculate (for large enough $n$):
\[ \left(\frac{(1+\varepsilon)d(q+1)}{e}\right)^{(1+\varepsilon)d} = \left(\frac{1+\varepsilon}{4e}\right)^{(1+\varepsilon)d} (4d(q+1))^{(1+\varepsilon)d} > \left(\frac{1+\varepsilon}{4e}\right)^{(1+\varepsilon)d} \left(\frac{n^{d+1}}{3}\right)^{1 + \varepsilon} \]
\[ = n^{(1+\varepsilon)d + 1} e^{a_\varepsilon + b_\varepsilon d + \varepsilon \ln n} > n^{(1 + \varepsilon) d + 1}\] where $a_\varepsilon, b_\varepsilon$ are constants depending on $\varepsilon$, and the last inequality due to $d = o(\ln n)$. From this it follows by definition that $D_n^q < (1+\varepsilon)(d_n^q + 1)$. Since $d_n^q = \omega(1)$, it also holds that $D_n^q < (1+2\varepsilon)d_n^q$ for large enough $n$, and we are done.
\end{proof}

\subsection{Small Bias}

In this subsection we will analyze the Waiter-Client Maximum Degree game played with a relatively small bias ($q = o\left(\frac{n}{\ln n}\right)$), proving tight results for the degree that Waiter can force, and that Client can avoid, playing $WC_\Delta(n, q, D)$. We then prove a simple result for $q = \Theta\left(\frac{n}{\ln n}\right)$, which is not as tight and has a multiplicative gap.

Our main goal is to prove the following result (as stated in Section \ref{introuction_section}):

\smallbias*

The only effort for this theorem is proving Client's side, as Waiter's side is trivial. In fact, no matter how Waiter plays, the number of edges in Client's graph is exactly $\left\lfloor \frac{n(n-1)}{2(q+1)} \right\rfloor$, yielding that the average degree in Client's graph is $(1+o(1))\frac{n}{q+1}$, and therefore some vertex has to have degree at least that large. So regardless of Waiter and Client's strategies, the maximum degree in Client's graph will be at least $(1+o(1))\frac{n}{q+1}$.

When we proved Theorem \ref{max_deg_upper}, we relied on ideas from \cite{Discrepancy Games} and adapted them to the Waiter-Client settings. These ideas seem to work perfectly for $q = 1$, but somehow generalizing them to work for other biases (even $q = 2$) proves difficult. In his book (\cite{Beck Monograph}), Beck has overcome this difficulty, and proved a result similar to \cite{Discrepancy Games} which holds for bias larger than $1$. We rely on his ideas and adapt them to the Waiter-Client setting, and then utilize them to provide Client with a strategy for the Waiter-Client Maximum Degree Game played with $q = o\left(\frac{n}{\ln n}\right)$. We will first present a general framework, and then use it specifically for the Maximum Degree game. We wish to note that the general framework we present can be further generalized, but it makes the proofs even more technical than they already are. Throughout the proofs, we will use similar notation to the notation used by Beck when proving his result (but not exactly the same, due to some differences). We begin by describing a general framework:

\begin{theorem}\label{biased_general_potential_framework}
Suppose Waiter and Client play a Waiter-Client game on a hypergraph $(X, \mathcal{F})$ with bias $q$, and let $\varepsilon, \alpha, \beta > 0$. Define 
\begin{align*}
E^+ &= \alpha\sum\limits_{k=0}^{\lfloor \frac{q}{2} \rfloor} \frac{\beta^{2k}}{2k+1}{q \choose 2k},
\\ E^- &= \beta - \sum\limits_{k=1}^{\lfloor \frac{q}{2} \rfloor}  \frac{\beta^{2k}}{2k}{q-1 \choose 2k-1} - \alpha \sum\limits_{k=1}^{\lfloor \frac{q}{2} \rfloor}  \frac{\beta^{2k}}{2k+1}{q-1 \choose 2k-1}. 
\end{align*}
If the following two conditions hold:
\[ E^+ \leq qE^- ;\]
\[ \sum_{A \in \mathcal{F}} \left((1+\alpha)^\frac{1+\varepsilon}{q+1}(1 - \beta)^\frac{q - \varepsilon}{q+1}\right)^{-|A|} < 1,\]

then Client has a strategy to avoid claiming at least $(1+\varepsilon)|A|$ elements of any edge $A \in \mathcal{F}$.
\end{theorem}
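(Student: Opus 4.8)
The plan is to follow Beck's potential-function method, adapted to Waiter-Client, mirroring the structure of the unbiased proof but with a multiplicative rather than additive deviation. I would maintain, for each surviving edge $A \in \mathcal{F}$, a ``danger'' weight that measures how close Client is to having claimed $(1+\varepsilon)|A|$ elements of $A$, and show Waiter cannot force the global potential to grow under an appropriate Client strategy. Concretely, for an edge $A$, let $c$ be the number of elements of $A$ currently claimed by Client and $t$ the number of elements of $A$ already offered (claimed by either player); the natural weight is a normalized binomial-type expression of the form $\lambda^{(1+\varepsilon)|A| - c}(\text{something})^{|A|-t}$, where $\lambda = (1+\alpha)^{1/(q+1)}$ and the base in the second factor is forced to be $(1-\beta)$ by the two defining relations for $E^+, E^-$. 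The quantity $(1+\alpha)^{(1+\varepsilon)/(q+1)}(1-\beta)^{(q-\varepsilon)/(q+1)}$ in the hypothesis is exactly the per-element multiplicative decay of this weight when one ``average'' round is played on an edge still fully free, so the summability hypothesis $\sum_{A}(\cdots)^{-|A|} < 1$ says the total weight starts below $1$.

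The key steps, in order, are: (1) Define the weight $w(A)$ of a live edge precisely and the total potential $\Phi = \sum_{A \text{ live}} w(A)$; verify that $\Phi < 1$ at the start of the game is equivalent to the second hypothesis. (2) Fix Client's strategy: when Waiter offers $q+1$ elements $x_0, x_1, \dots, x_q$, Client claims the element $x_i$ whose choice minimizes the resulting $\Phi$ (breaking ties arbitrarily). (3) The heart of the argument: show that with this greedy choice $\Phi$ never increases. One bounds the increase in $\Phi$ caused by a round by the \emph{average} over Client's $q+1$ possible choices, and for a single edge $A$ one computes this average by conditioning on how many of the $q+1$ offered elements lie in $A$; the ``$+$'' contribution (Waiter's $q$ elements, which push $c$ toward the threshold) is controlled by $E^+$ and the ``$-$'' contribution (Client's one element, which pushes away) by $E^-$, and the relation $E^+ \le qE^-$ is exactly what guarantees the average change is $\le 0$ edge by edge — hence $\Phi$ is non-increasing globally. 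The binomial sums over $k$ with factors $\frac{\beta^{2k}}{2k+1}$ and $\frac{\beta^{2k}}{2k}$ arise from expanding $(1-\beta)^{j}$ and $(1+\alpha)^{j}$ / integrating, i.e. from averaging the weight change over which subset of the offered block hits $A$. (4) Conclude: since $\Phi < 1$ throughout, no live edge ever has weight $\ge 1$; but an edge in which Client has claimed $\ge (1+\varepsilon)|A|$ elements would have weight $\ge \lambda^{0}\cdot 1 = 1$ (or one sets the threshold exponent so that reaching it forces weight $1$), a contradiction, so Client never reaches $(1+\varepsilon)|A|$ in any edge.

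The main obstacle I anticipate is step (3), specifically the bookkeeping that identifies the two identities defining $E^+$ and $E^-$ with the exact expected one-round change of a single edge's weight. This requires care: one must handle the three regimes for an edge during a round — no offered element in $A$, some offered elements in $A$ but not Client's pick, Client's pick in $A$ — and show the greedy (min) choice is dominated by the uniform average; then expand the multiplicative weight change into the binomial-sum form and match coefficients. There is also a boundary subtlety when fewer than $q+1$ free elements remain (Waiter takes them all): here $\Phi$ can only decrease since Client's threshold counters only move favorably, so it is harmless. I would also need the elementary fact $(1-\beta) > 0$, i.e. that the hypotheses implicitly force $\beta < 1$ and $E^- \ge 0$, for the weights to behave monotonically — this should follow from $E^+ \le qE^-$ together with $E^+ \ge 0$. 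Apart from these points, the argument is a routine ``potential never increases'' induction exactly parallel to the proof of the lemma preceding Theorem~\ref{general_UB}.
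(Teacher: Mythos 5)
Your high-level skeleton — potential function, greedy Client, show the potential never increases, conclude from $\Phi<1$ — is indeed the paper's. But the heart of the argument, your step (3), diverges from the paper's proof in a way that leaves a real gap. The paper does \emph{not} argue by averaging each edge's weight over Client's $q+1$ possible picks. Instead it writes the exact new potential via inclusion--exclusion as a sum over subsets $S$ of the offered block $T$, involving the \emph{subset potentials} $\psi_i(S)=\sum_{A\supseteq S}\psi_i(A)$, and then applies the convexity-type bound $\psi_i(S)\leq\frac{1}{|S|}\sum_{z\in S}\psi_i(z)$. The $\frac{1}{2k+1}$ and $\frac{1}{2k}$ factors and the binomials $\binom{q}{2k}$, $\binom{q-1}{2k-1}$ in $E^+,E^-$ come precisely from this step (counting subsets $S\subseteq T$ of a given size and dividing by $|S|$ or $|S|+1$), after which all negative even-order contributions are simply dropped. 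The resulting bound is in terms of the \emph{singleton} potentials $\psi_i(z_1),\dots,\psi_i(z_{q+1})$, and Client's strategy is to claim the $z_j$ with smallest $\psi_i(z_j)$ — not the $z_j$ minimizing the resulting $\Phi$. Your edge-by-edge averaging produces an entirely different family of inequalities (a condition of the form $(1-\beta)^s + \tfrac{s(\alpha+\beta)(1-\beta)^{s-1}}{q+1}\le 1$ for every $s$), and it is not obvious — and you offer no argument — that the stated hypothesis $E^+\le qE^-$ implies them. So the claim that ``$E^+\le qE^-$ is exactly what guarantees the average change is $\le 0$ edge by edge'' is asserted, not derived, and does not correspond to what $E^+,E^-$ actually encode.

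Two further sign/role errors compound this. First, you have $E^+$ and $E^-$ backwards: in the paper's computation $E^+$ multiplies $\psi_i(z_1)$, where $z_1$ is \emph{Client's} pick (the one that increases $|C\cap A|$ and is therefore dangerous), and $E^-$ multiplies the $q$ singleton potentials of \emph{Waiter's} picks; Waiter's elements decrease the potential, not push Client toward the threshold. Second, your proposed weight $\lambda^{(1+\varepsilon)|A|-c}(\cdot)^{|A|-t}$ with $\lambda=(1+\alpha)^{1/(q+1)}>1$ \emph{decreases} as Client's count $c$ grows, the opposite of what a danger potential should do, and your termination step (``weight $\ge \lambda^0\cdot 1=1$'') silently assumes the second factor is $1$, which generally fails before the edge is fully offered. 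The paper's weight is $\psi_i(A)=(1+\alpha)^{|C_i\cap A|-\frac{1+\varepsilon}{q+1}|A|}(1-\beta)^{|W_i\cap A|-\frac{q-\varepsilon}{q+1}|A|}$; the two normalizing exponents sum to $|A|$, which is exactly what makes $\psi(A)\ge 1$ as soon as $|C\cap A|\ge\frac{(1+\varepsilon)}{q+1}|A|$. (Note this is also the threshold actually established; the ``$(1+\varepsilon)|A|$'' in the theorem statement is a typographical slip — the conclusion really is that Client claims fewer than $\frac{(1+\varepsilon)}{q+1}|A|$ elements of every $A$, as the end of the paper's proof and the application to Theorem \ref{small_bias_maximum_degree} make clear — and getting the normalization right is load-bearing, not cosmetic.)
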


Note the similarity of the result to a Balancing game; Client indeed tries to avoid claiming \textbf{many} elements of an edge rather than his usual goal of avoiding claiming a full edge. We proceed to prove Theorem \ref{biased_general_potential_framework} and then apply it to deduce a result for the Waiter-Client Maximum degree game.

\begin{proof}
Client will use a potential method. After round $i$, we denote by $W_i, C_i$ the elements claimed by Waiter and Client respectively until (and including) round $i$. We define a potential function (after round $i$) for an edge $A \in \mathcal{F}$ to be $\psi_i(A) = (1+\alpha)^{|C_i \cap A| - \frac{1+\varepsilon}{q+1}|A|}(1-\beta)^{|W_i \cap A| - \frac{q-\varepsilon}{q+1}|A|}$. We define the potential of the game after round $i$ to be $\Psi_i = \sum\limits_{A \in \mathcal{F}}\psi_i(A)$. We also define the potential (after round $i$) of a set of elements $S \subseteq X$ to be $\psi_i(S) = \sum\limits_{A \in \mathcal{F}: S \subseteq A} \psi_i(A)$. Note that the second condition in the theorem statement exactly asserts that $\Psi_0 < 1$. For singletons, we will use $\psi_i(z)$ instead of $\psi_i(\{z\})$. 

As in most potential based strategies, we will show that Client can guarantee the potential does not increase during the game. Note that in the end of the game (say, after $r$ rounds), if Client claimed $(1+\varepsilon)|A|$ elements of some edge $A$, then $\psi_r(A) \geq 1$ and thus $\Psi_r \geq 1$. Therefore, by keeping the potential below $1$ during the entire game (and in the end of the game), Client essentially ensures he will not have so many elements in any edge. It is easy to see that during the last round, where Waiter claims all remaining unclaimed elements, the potential does not increase.
We now analyze the behavior of the potential $\Psi$ during the game. Suppose at round $i$ Waiter offered elements $z_1, \dots, z_{q+1},$ and without loss of generality Client claimed $z_1$. Denote $T = \{z_2, \dots, z_{q+1}\},$ and note that by a simple inclusion-exclusion type of calculation, we have:
\vspace{-.2cm}
\begin{align*}
\Psi_{i+1} &= \sum_{A \subset \mathcal{F}} \Psi_{i+1}(A) = \sum_{S \subseteq T} \left( \sum_{\substack{A : A \cap T = S \\ z_1 \in A}} (1+\alpha)(1-\beta)^{|S|} \Psi_i (A) +  \sum_{\substack{A : A \cap T = S \\ z_1 \notin A}} (1-\beta)^{|S|} \Psi_i (A)\right) 
\\ &= \sum_{S \subseteq T} \left( \sum_{\substack{A : S \subseteq A \\ z_1 \in A}} (1+\alpha)(-\beta)^{|S|} \Psi_i (A) +  \sum_{\substack{A : S \subseteq A \\ z_1 \notin A}} (-\beta)^{|S|} \Psi_i (A)\right) 
\\ &= \sum_{S \subseteq T} \left( \Psi_i(S)(-\beta)^{|S|} + \Psi_i(\{z_1\} \cup S)\alpha(-\beta)^{|S|} \right)
\\ &= \Psi_i + \sum_{\emptyset \neq S \subseteq T} \psi_i(S)(-\beta)^{|S|} +\sum_{S \subseteq T} \psi_i(\{z_1\} \cup S)\alpha(-\beta)^{|S|},
\end{align*}
where the third equality is due to the fact that $(1-\beta)^{|S|} = \sum\limits_{R \subseteq S} (-\beta)^{|R|}$ for any $S$.
 
Noting that, by definition, $\psi_i(S) \leq \frac{1}{|S|}\sum\limits_{z \in S} \psi_i(z)$, we can estimate:
\vspace{-.2cm}
\begin{align*}
\Psi_{i+1} &\leq \Psi_i + \alpha\psi_i(z_1)\sum_{S \subseteq T} \frac{(-\beta)^{|S|}}{|S| + 1} + \sum_{j=2}^{q+1}\psi_i(z_j) \sum_{S \subseteq T : z_j \in S} \left(\frac{(-\beta)^{|S|}}{|S|} + \alpha\frac{(-\beta)^{|S|}}{|S|+1}\right)
\\ &\leq \Psi_i + \alpha\psi_i(z_1)\sum_{k=0}^{\lfloor \frac{q}{2} \rfloor} \frac{\beta^{2k}}{2k + 1}{q \choose 2k} +  \sum_{j=2}^{q+1}\psi_i(z_j)\left(-\beta + \sum\limits_{k=1}^{\lfloor \frac{q}{2} \rfloor}  \frac{\beta^{2k}}{2k}{q-1 \choose 2k-1} + \alpha \sum\limits_{k=1}^{\lfloor \frac{q}{2} \rfloor}  \frac{\beta^{2k}}{2k+1}{q-1 \choose 2k-1}\right)
\\ &= \Psi_i + \psi_i(z_1)E^+ - \sum_{j=2}^{q+1}\psi_i(z_j)E^-.
\end{align*}
(Note that we get to the second inequality simply by omitting all of the negative terms except for the ones with $-\beta$ coefficient (the terms with $S$ such that $|S|$ is odd).)

Now it is easy to describe Client's strategy - when Waiter offers elements $z_1, \dots, z_{q+1}$, Client claims the element $z_j$ such that $\psi_i(z_j)$ is the smallest. Indeed, if w.l.o.g. we let  $z_1$ be the element which minimizes $\psi_i(z_j)$, plugging it in the previous equation together with the theorem's assumption that $E^+ \leq qE^-$ we have

\[ \Psi_{i+1} - \Psi_i \leq \psi_i(z_1)E^+ - \sum_{j=2}^{q+1}\psi_i(z_j)E^- \leq \psi_i(z_1)(E^+ - qE^-) \leq 0 \]
and therefore Client can guarantee that the potential does not increase. When justifying the above inequality, note the technical detail that $E^+ \geq 0$ by definition, and since $E^+ \leq qE^-$ we also have $E^- \geq 0$. As explained in the beginning of the proof, this is enough for Client to avoid claiming $\frac{(1+\varepsilon)}{q+1}|A|$ elements of any edge $A \in \mathcal{F}$, and thus we are done.
\end{proof}

We now have the general framework we need. All it takes from here is to set the correct $\varepsilon, \alpha, \beta$ and to get the desired result. For the sake of simplicity, we do not optimize constants; even without it the result gives an asymptotic estimate. We now prove Theorem \ref{small_bias_maximum_degree}:

\begin{proof}[Proof of Theorem \ref{small_bias_maximum_degree}]
Let us set $\varepsilon = 4\sqrt{\frac{(q+1)\ln n}{n}}, \alpha = (1+o(1))\frac{q+1}{5q+1}\varepsilon, \beta = \frac{\alpha + 2\alpha^2}{q}$, where the $1+o(1)$ in the choice of $\alpha$ will be determined later. Note that by our assumption on $q$, we have $\varepsilon = o(1)$ and therefore $\alpha = o(1), \beta q = o(1)$ as well.

We claim both conditions of Theorem \ref{biased_general_potential_framework} hold. We start by showing that $E^+ \leq qE^-$:
\vspace{-.2cm}
\begin{align*}
&E^+ = \alpha\sum\limits_{k=0}^{\lfloor \frac{q}{2} \rfloor} \frac{\beta^{2k}}{2k+1}{q \choose 2k} \leq \alpha\sum\limits_{k=0}^{\lfloor \frac{q}{2} \rfloor} (\beta q)^{2k} \leq \frac{\alpha}{1 - (\beta q)^2}.
\\ &E^- = \beta - \sum\limits_{k=1}^{\lfloor \frac{q}{2} \rfloor}  \frac{\beta^{2k}}{2k}{q-1 \choose 2k-1} - \alpha \sum\limits_{k=1}^{\lfloor \frac{q}{2} \rfloor}  \frac{\beta^{2k}}{2k+1}{q-1 \choose 2k-1} \geq \beta \left(1 - (1+\alpha)\sum\limits_{k=1}^{\lfloor \frac{q}{2} \rfloor}  (\beta q)^{2k-1}\right) \geq \beta\left(1-\frac{(1+\alpha)\beta q}{1 - (\beta q)^2}\right).
\\ & \Rightarrow \left(1 - (\beta q)^2\right) (qE^- - E^+) \geq   \beta q \left(1 - (\beta q)^2 - (1+\alpha)\beta q\right) - \alpha = \alpha^2 + O(\alpha ^ 3) > 0.
\end{align*}

Next we show that the second condition of the theorem holds. First we calculate:
\vspace{-.2cm}
\begin{align*}
& \ln \left((1+\alpha)^{1+\varepsilon}(1 - \beta)^{q - \varepsilon}\right) = (1+\varepsilon)\ln (1 + \alpha) + (q - \varepsilon) \ln (1 - \beta) 
\\ & = (1 + \varepsilon)\left(\alpha - \frac{\alpha^2}{2} + O(\alpha^3)\right) - (q - \varepsilon)\left(\beta + \frac{\beta^2}{2} + O(\beta^3)\right) = -U\alpha^2 + V\alpha + O(\alpha^3)
\\ & \text{where } U = \frac{5q + 1}{2q}(1+o(1)), V = \frac{q+1}{q}\varepsilon.
\end{align*}

By choosing $\alpha = \frac{V}{2U} = (1+o(1))\frac{q+1}{5q+1}\varepsilon$ and plugging it in, we get that 
\[ \ln \left((1+\alpha)^{1+\varepsilon}(1 - \beta)^{q - \varepsilon}\right) = \frac{V^2}{4U} + O(\alpha^3) \geq (1+o(1))\frac{\varepsilon^2}{12}, \]
with the $(1+o(1))$ due to the fact that $O(\alpha ^3) = O(\varepsilon^3)$.

Note that in the maximum degree game, every edge $A \in \mathcal{F}$ is of size $|A| = n - 1$, and also $|\mathcal{F}| = n$. Therefore, the second condition of Theorem \ref{biased_general_potential_framework} holds if
\[ \left((1+\alpha)^{1+\varepsilon}(1 - \beta)^{q - \varepsilon}\right)^{-\frac{n-1}{q+1}} < \frac{1}{n}.\]
Taking the logarithm of both sides, we need to show that 
\[ \ln \left((1+\alpha)^{1+\varepsilon}(1 - \beta)^{q - \varepsilon}\right) > \frac{(q+1)\ln n}{n-1}. \]
Indeed, we have
\[ \ln \left((1+\alpha)^{1+\varepsilon}(1 - \beta)^{q - \varepsilon}\right) \geq (1+o(1))\frac{\varepsilon^2}{12} = (1+o(1))\frac{16}{12} \frac{(q+1)\ln n}{n}> \frac{(q+1)\ln n}{n-1}, \]
completing the proof.

\end{proof}

Note that our proof of Theorem \ref{small_bias_maximum_degree} actually gives a specific error term, as $\varepsilon$ is a specific function of $q$ and $n$. The smaller the value of $q$ is, the tighter our bound becomes for the $o(1)$ part. In particular, for constant $q$ we get a $\frac{n}{q+1} + O_q(\sqrt{n \ln n})$ upper bound. For $q=1$, this gives the same asymptotic result as in Theorem \ref{max_deg_upper}. However, we proved Theorem \ref{max_deg_upper} differently, mainly because it is simpler and gives a specific and relatively decent multiplicative constant of $\frac{1}{\sqrt{2}}$ in front of the error term.

When looking at the results for large and small bias, random graphs immediately come to mind. The maximum degree of the binomial random graph $G(n,p)$ and the maximum degree of the random graph $G(n,m)$ with $m = \left \lfloor p {n \choose 2} \right \rfloor$ both typically behave very similarly and are well studied (see \cite{Introduction to Random Graphs}, \cite{Random Graphs}). Their behavior also nicely matches the behavior of the maximum degree in Client's graph in a perfectly played biased Waiter-Client Maximum Degree game. Indeed, when $p = o\left(\frac{\ln n}{n}\right)$, which corresponds to $q = \omega\left(\frac{n}{\ln n}\right)$, the random graph behavior matches that of Theorems \ref{large_bias_inf}, \ref{large_bias_const}. When $p = n^{-1 + o(1)}$ the maximum degree of $G(n,p)$ is of the same magnitude as the one attained by Theorem \ref{large_bias_inf}, and when $p = \Theta\left(n^{- 1 - \frac{1}{d}}\right)$, the maximum degree of $G(n,p)$ is w.h.p. either $d$ or $d - 1$, perfectly matching the transition between maximum degree $d-1$ and $d$ appearing in the Waiter-Client game when $q = \Theta\left(n^{1 + \frac{1}{d}}\right)$ given by Theorem \ref{large_bias_const}. When $p = \omega\left(\frac{\ln n}{n}\right)$, which corresponds to $q = o\left(\frac{n}{\ln n}\right)$, the random graph $G(n,p)$ has w.h.p. a maximum degree of $(1+o(1))np$, exactly matching Theorem \ref{small_bias_maximum_degree}. Thus we see that the Waiter-Client Maximum Degree game, when played perfectly, behaves very much like it does when played randomly.

The natural question to address is what happens when $q = \Theta\left(\frac{n}{\ln n}\right)$? For the random graph, it is some kind of a transition phase between having asymptotically equal maximum and average degrees and having asymptotically different maximum and average degrees. We see the same behavior in the Waiter-Client Maximum Degree game, and give the following result:

\begin{proposition}
Suppose $q = c\frac{n}{\ln n}$ for some constant $c > 0$. Then Waiter can force Client to have maximum degree at least $(1+o(1))\frac{n}{q+1}$, while Client can ensure his maximum degree is at most $C\frac{n}{q+1}$ for some $C > 0$ (which depends only on $c$).
\end{proposition}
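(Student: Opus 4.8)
The plan is to treat the two sides separately; the Waiter side is immediate, and the Client side reduces to a single application of the Client criterion of Theorem~\ref{client_general_criteria}, exploiting that in this regime $\frac{n}{q+1}=(1+o(1))\frac{\ln n}{c}=\Theta(\ln n)$ — precisely the range where that criterion already yields an $O(\ln n)$ bound on $\Delta(G_C)$. For Waiter: no matter how the game is played Client ends up with exactly $\bigl\lfloor {n\choose 2}/(q+1)\bigr\rfloor$ edges, so the average degree of $G_C$ equals $\frac{2}{n}\bigl\lfloor {n\choose 2}/(q+1)\bigr\rfloor=(1+o(1))\frac{n}{q+1}$ (the rounding and the $\frac{n-1}{q+1}$ versus $\frac{n}{q+1}$ discrepancies are $o(1)$, hence negligible against $\frac{n}{q+1}=\Theta(\ln n)$); since the maximum degree is at least the average degree, Waiter needs no strategy at all. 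This is the same averaging observation already used for Theorem~\ref{small_bias_maximum_degree}, and it also accounts for the lower bound $(1+o(1))\frac{n}{q+1}$ in the statement.

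For Client, I would apply Theorem~\ref{client_general_criteria} to the hypergraph on $X=E(K_n)$ whose winning sets are, for each vertex $v$, all ${n-1 \choose D}$ sets of $D$ edges incident to $v$; there are $n{n-1\choose D}$ such sets, each of size $D$, and avoiding all of them is exactly the condition $\Delta(G_C)\le D-1$. Thus it suffices to verify $n{n-1\choose D}(q+1)^{-D}<1$. Bounding ${n-1\choose D}\le (en/D)^D$, this reduces to the inequality $D\ln\!\bigl(\tfrac{D(q+1)}{en}\bigr)>\ln n$. I would then set $D=\bigl\lceil C\tfrac{n}{q+1}\bigr\rceil$ for a constant $C=C(c)$ to be chosen, so that $\tfrac{D(q+1)}{en}=(1+o(1))\tfrac{C}{e}$ and the left-hand side becomes $(1+o(1))\tfrac{C\ln(C/e)}{c}\ln n$. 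Hence it is enough to pick $C$ large enough that $C\ln(C/e)\ge 2c$ — such $C$ exists since $C\ln(C/e)\to\infty$, and note this automatically forces $C>e$, so the logarithm above is eventually positive. For this $C$ and all large $n$ the required inequality holds, giving $\Delta(G_C)\le D-1\le C\tfrac{n}{q+1}$, which is the claimed upper bound with a constant depending only on $c$.

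The only real work here is bookkeeping of the $(1+o(1))$ factors — those coming from $q+1$ versus $c\,n/\ln n$, from the ceiling in the definition of $D$, and from the estimate ${n-1\choose D}\le (en/D)^D$ — and checking that the strict slack in $C\ln(C/e)\ge 2c>c$ absorbs all of them; there is no genuine combinatorial difficulty. The reason the constant cannot be pushed to $1+o(1)$ here, unlike in the other ranges of $q$ treated in this section, is precisely that at $q=\Theta(n/\ln n)$ the number of winning sets ($n$) and the exponential rate governing their overlaps are of the same order, so the crude Erd\H{o}s--Selfridge-type bound cannot resolve the multiplicative constant; recovering a sharp constant would require the refined potential argument of Theorem~\ref{biased_general_potential_framework}, which the stated (weaker) claim does not need.
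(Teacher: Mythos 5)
Your proof is correct and follows essentially the same approach as the paper: the Waiter side is the same averaging observation already recorded after Theorem~\ref{small_bias_maximum_degree}, and the Client side is exactly the Erd\H{o}s--Selfridge-type criterion (Theorem~\ref{client_general_criteria}) applied to the family of $D$-subsets of each vertex's star with $D=\lceil Cn/(q+1)\rceil$, which the paper packages as Proposition~\ref{large_bias_client} and which you re-derive inline with the same bound ${n-1 \choose D}\le (en/D)^D$. Your sufficient condition $C\ln(C/e)\ge 2c$ is just a slack-padded form of the paper's $\frac{C(\ln C-1)}{c}\ge 1$, so the argument and the resulting constant are the same.
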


\begin{proof}
For Waiter's side, we already established after stating Theorem \ref{small_bias_maximum_degree} that it holds regardless of Client and Waiter's strategies. For Client's side, we utilize Proposition \ref{large_bias_client} and see that setting $d = C\frac{n}{q+1}$ satisfies the condition whenever $\frac{C(\ln C - 1)}{c} \geq 1$.
\end{proof}

We did not make an effort in trying to optimize (or even explicitly state some of) the constants in the previous proposition. We only tried to give a taste of the fact that during the transition phase $q = \Theta \left(\frac{n}{\ln n}\right)$ the maximum degree in the game is still somewhat controlled.

\section{Concluding remarks and open questions}

In this paper we have studied the Waiter-Client Maximum Degree game. We first focused on the unbiased case $q = 1$, where it is easy to show that Client can avoid maximum degree of $(1+o(1))\frac{n}{2}$ while Waiter can force Client to have a maximum degree of the same asymptotic value. Our interest was in the error term, partly because it is still an open question for the Maker-Breaker Maximum Degree game. We managed to determine the correct magnitude of the error term, showing that Client can avoid a maximum degree of $\frac{n}{2} + (1+o(1))\sqrt{\frac{n \ln n}{2}}$, while Waiter can force a maximum degree of $\frac{n}{2} + c\sqrt{n \ln n}$ for some $c > 0$. Naturally, we are interested in the correct constant in front of the $\sqrt{n \ln n}$ expression.

\begin{question}
What is the constant $c$ such that playing the unbiased Waiter-Client Maximum Degree game, Waiter can force Client to have maximum degree of $\frac{n}{2} + (1+o(1))c\sqrt{n \ln n}$ while Client can ensure his maximum degree is at most $\frac{n}{2} + (1+o(1))c\sqrt{n \ln n}$?
\end{question}

Another natural question is what happens when $q$ is constant, but not $1$. The most simple example is, of course, $q=2$. It is easy to see that the maximum degree (assuming perfect play) in this case will be $(1+o(1))\frac{n}{q+1}$ (in fact it can be deduced from Theorem \ref{small_bias_maximum_degree}). The question is, what is the magnitude of the error term? 

\begin{question}
Let $q \geq 2$ be a fixed integer. What is the correct error term $\gamma(n)$ such that playing the Waiter-Client Maximum Degree game, Waiter can force Client to have a maximum degree of at least $\frac{n}{q+1} + c\gamma(n)$ (for some $c>0$) while Client can ensure his maximum degree is at most $\frac{n}{q+1} + C\gamma(n)$ (for some $C>0$)?
\end{question}

We believe that the error term should also be $\Theta_q(\sqrt{n \ln n})$ just like in the unbiased case, but we were not able to prove it. The upper bound (Client's side) can be deduced by Theorem \ref{small_bias_maximum_degree}, but for the lower bound our approach needs at least some tweaking to work. One can define a process similar to the Advantage Process defined in this paper, and analyze it when trying to answer this question. We believe understanding the Advantage Process better will ultimately solve this question entirely.

Next we studied the biased case and showed tight asymptotic results for the cases $q = o\left(\frac{n}{\ln n}\right)$ and $q = \omega\left(\frac{n}{\ln n}\right)$, except for certain intervals $q \in \left[c_dn^{1 + \frac{1}{d}}, C_dn^{1 + \frac{1}{d}}\right]$. In the case $q = o\left(\frac{n}{\ln n}\right)$ our result actually yields specific bounds for the error terms (the difference between the outcome of perfect play and $\frac{n}{q+1}$). Naturally, it is interesting to understand more about what happens in the ``transition phase" $q = \Theta\left(\frac{n}{\ln n}\right)$.

\begin{question}
Suppose $q = t\frac{n}{\ln n}$ for some constant $t > 0$. What is the constant $c = c(t)$ such that playing the Waiter-Client Maximum Degree game with bias $q$, Waiter can force Client to have maximum degree at least $(1+o(1))c\frac{n}{q+1}$ while Client can ensure that his maximum degree is at most $(1+o(1))c\frac{n}{q+1}$?
\end{question}

All of our results show that the maximum degree in Client's graph, when Waiter and Client play perfectly, behaves very similarly to how it behaves if they play randomly. We have shown that the \textit{probabilistic intuition} correctly predicts the results of the Waiter-Client Maximum Degree game. Answering the previous question will give us the complete picture on the matter, and it is very interesting to know whether in the transition phase the maximum degree attained by perfect play behaves similarly to the one attained by random play.

\section*{Acknowledgements}

We would like to thank Ohad Klein for sharing his useful thoughts on the Advantage Process.

\end{document}